\documentclass{article}
\usepackage[top=25mm,bottom=35mm,left=30mm,right=30mm]{geometry}

\usepackage[dvipdfmx]{graphicx}
\usepackage{amsmath,amssymb,amscd,amsthm,bm}
\usepackage{color}
\usepackage[all]{xy}

\newtheorem{thm}{Theorem}[section]
\newtheorem{lem}[thm]{Lemma}
\newtheorem{prop}[thm]{Proposition}

\newtheorem{exam}[thm]{Example}

\makeatletter %\fB => \mathfrak{B}, \lB => \mathbb{B}, \cB => \mathcal{B}, \bB => \mathbf{B}
\@tempcnta\z@
\loop\ifnum\@tempcnta<26
\advance\@tempcnta\@ne
\expandafter\edef\csname f\@Alph\@tempcnta\endcsname{\noexpand\mathfrak{\@Alph\@tempcnta}}
\expandafter\edef\csname l\@Alph\@tempcnta\endcsname{\noexpand\mathbb{\@Alph\@tempcnta}}
\expandafter\edef\csname c\@Alph\@tempcnta\endcsname{\noexpand\mathcal{\@Alph\@tempcnta}}
\expandafter\edef\csname b\@Alph\@tempcnta\endcsname{\noexpand\mathbf{\@Alph\@tempcnta}}
\repeat

\newcommand{\dd}{{\delta}}
\newcommand{\DD}{{\Delta}}
\newcommand{\ee}{{\varepsilon}}

\newcommand{\ra}{{\rightarrow}}

\newcommand{\Pers}{{\rm Pers}}
\newcommand{\pers}{{\rm pers}}

\newcommand{\Lip}{{\rm Lip}}
\newcommand{\Amp}{{\rm Amp}}
\newcommand{\diam}{{\rm diam}}
\newcommand{\mesh}{{\rm mesh}}

\newcommand{\median}{{\rm median}}
\newcommand{\KFDR}{{\rm KFDR}}

\newcommand{\lmid}{ \ \middle| \ }

\providecommand{\abs}[1]{\left\lvert#1\right\rvert}
\providecommand{\norm}[1]{\left\lVert#1\right\rVert}
\providecommand{\pare}[1]{\left( #1 \right)}
\providecommand{\rl}[1]{\left\{ #1 \right\}}
\providecommand{\card}[1]{{\rm card}\pare{#1}}

\providecommand{\inn}[2]{\langle #1, #2 \rangle}

\title{Persistence weighted Gaussian kernel for topological data analysis}
\author{Genki Kusano \thanks{Tohoku University, genksn@gmail.com}
\and Kenji Fukumizu \thanks{The Institute of Statistical Mathematics, fukumizu@ism.ac.jp}
\and Yasuaki Hiraoka \thanks{Tohoku University, hiraoka@wpi-aimr.tohoku.ac.jp}}
\date{}
\begin{document}
\maketitle

\begin{abstract} 
Topological data analysis (TDA) is an emerging mathematical concept for characterizing shapes in complex data. In TDA, persistence diagrams are widely recognized as a useful descriptor of data, and can distinguish robust and noisy topological properties.  This paper proposes a kernel method on persistence diagrams to develop a statistical framework in TDA. The proposed kernel satisfies the stability property and provides explicit control on the effect of persistence. Furthermore, the method allows a fast approximation technique. The method is applied into practical data on proteins and oxide glasses, and the results show the advantage of our method compared to other relevant methods on persistence diagrams.
\end{abstract}

\section{Introduction}
Recent years have witnessed an increasing interest in utilizing methods of algebraic topology for statistical data analysis. This line of research is called {\em topological data analysis} (TDA) \cite{Ca09}, which has been successfully applied to various areas including information science \cite{CIdSZ08,dSG07}, biology \cite{KZPSGP07,XW14}, brain science \cite{LCKKL11,PETCNHV14,SMISCR08}, biochemistry \cite{GHIKMN13}, and material science \cite{NHHEMN15, NHHEN15}.  In many of these applications, it is not straightforward to provide feature vectors or descriptors of data from their complicated geometric configurations.  The aim of TDA is to detect informative topological properties (e.g., connected components, rings, and cavities) from such data, and use them as descriptors.

A key mathematical apparatus in TDA is {\em persistent homology}, which is an algebraic method for extracting robust topological information from data.  To provide some intuition for the persistent homology, let us consider a typical way of constructing persistent homology from data points in a Euclidean space, assuming that the data lie on a submanifold.  The aim is to make inference on the topology of the underlying manifold from finite data.  We consider the $r$-balls (balls with radius $r$) to recover the topology of the manifold, as popularly employed in constructing an $r$-neighbor graph in many manifold learning algorithms. While it is expected that, with an appropriate choice of $r$, the $r$-ball model can represent the underlying topological structures of the manifold, it is also known that the result is sensitive to the choice of $r$.  If $r$ is too small, the union of $r$-balls consists simply of the disjoint $r$-balls. On the other hand, if $r$ is too large, the union becomes a contractible space. {\em Persistent homology} \cite{ELZ02} can consider {\em all} $r$ simultaneously, and provides an algebraic expression of topological properties together with their persistence over $r$.  We give a brief explanation of persistent homology in Supplementary material A.3.
\begin{figure}[htbp]
 \begin{center}
 \vspace{-3mm}
  \includegraphics[width=70mm]{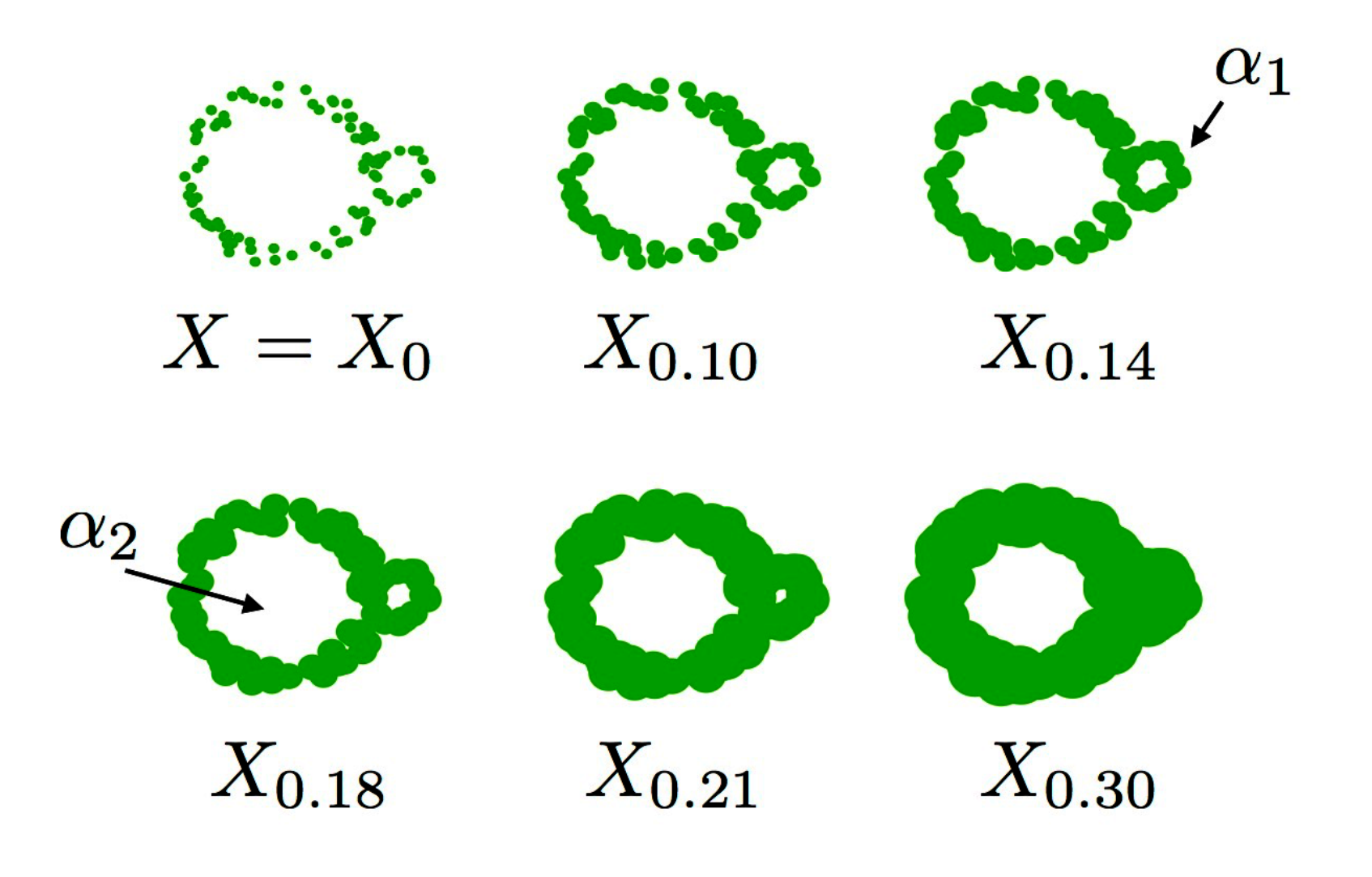}
  \vspace{-5mm}
  \caption{The union $ X_{r}$ of $r$-balls at points sampled from annuli with noise. }
 \end{center}
  \label{fig:filtration}
\end{figure}

The persistent homology can be visualized in a compact form called a {\em persistence diagram} $D=\{(b_i,d_i) \in \lR^{2} \mid i\in I, \ b_i \leq d_i\}$, and this paper focuses on persistence diagrams, since the contributions of this paper can be fully explained in terms of persistence diagrams. 
%back
Every point $(b_i,d_i)\in D$, called a {\em generator} of the persistent homology, represents a topological property (e.g., connected components, rings, and cavities) which appears at $X_{b_i}$ and disappears at $X_{d_i}$ in the $r$-ball model. Then, the {\em persistence} $d_i-b_i$ of the generator shows the robustness of the topological property under the radius parameter. As an example shown in Figure \ref{fig:filtration}, the rings $\alpha_{1},\alpha_{2}$ and other tiny ones are expressed as $x_{1},x_{2}$, and the other points in the persistence diagram shown in Figure \ref{fig:pd}. A topological property with large persistence can be regarded as a reliable structure, while that with small persistence (points close to the diagonal) is likely to be noise. In this way, persistence diagrams encode topological and geometric information of data points.

\begin{figure}[tbp]
 \begin{center}
  \includegraphics[width=50mm]{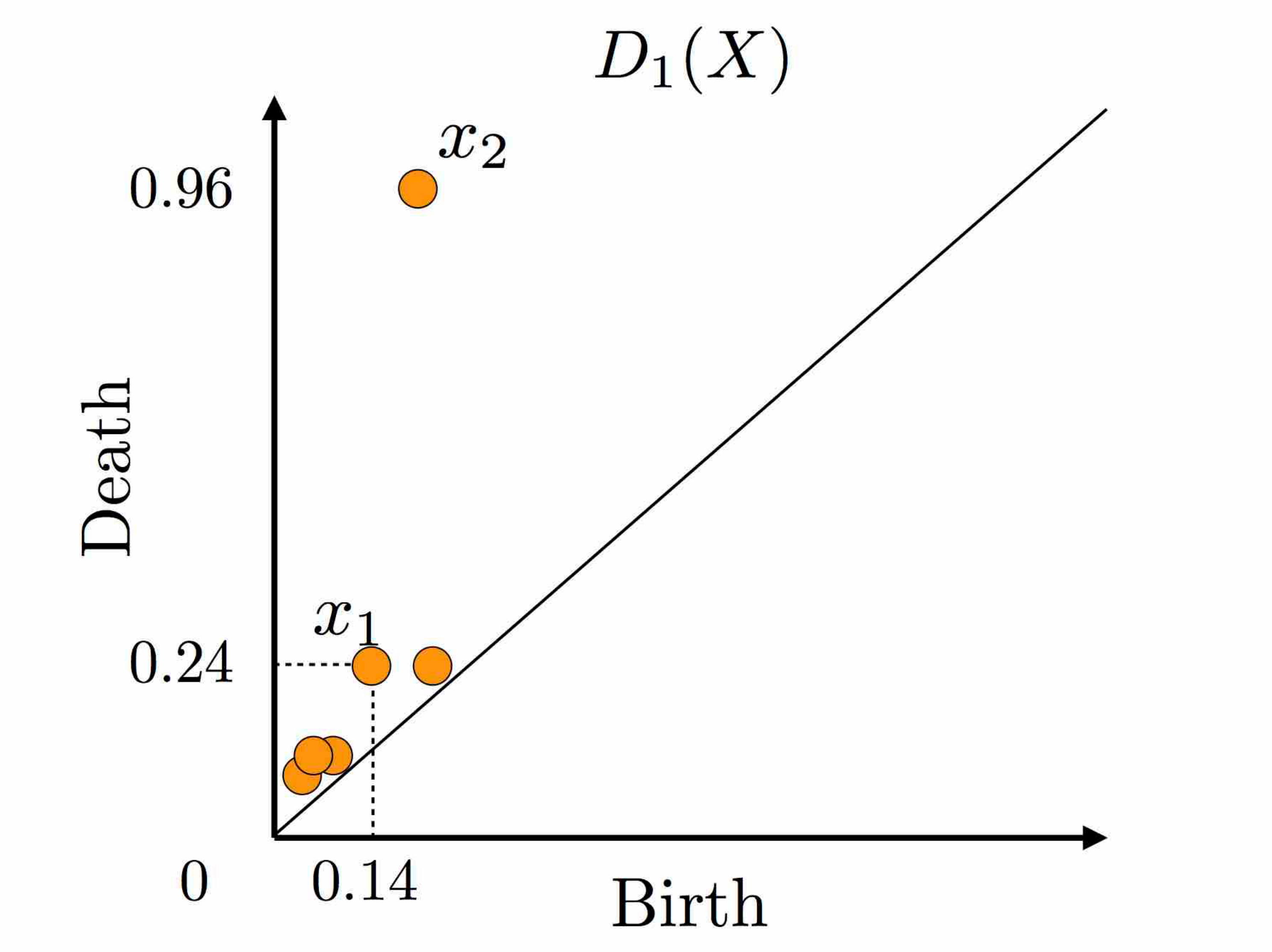}
  \vspace{-5mm}
 \end{center}
  \caption{The persistence diagram of the $r$-ball model in Figure \ref{fig:filtration}. The point $x_{1}$ represents the ring $\alpha_{1}$, which is born at $r=0.14$ and dies at $r=0.24$. The noisy rings are plotted as the points close to the diagonal.}
  \label{fig:pd}
\end{figure}

While persistence diagrams nowadays start to be applied to various problems such as the ones listed in the beginning of this section, statistical or machine learning methods for analysis on persistence diagrams are still limited.  In TDA, analysts often elaborate only one persistence diagram and, in particular, methods for handling many persistence diagrams, which should be supposed to contain randomness from the data, are at the beginning stage  (see the end of this section for related works). Hence, developing a statistical framework on persistence diagrams is a significant issue for further success of TDA.

To this aim, this paper discusses kernel methods for persistence diagrams (see Figure \ref{fig:overview}).  Since a persistence diagram is a point set of variable size, it is not straightforward to apply standard methods of statistical data analysis, which typically assume vectorial data.  Here, to vectorize persistence diagrams, we employ the framework of kernel embedding of (probability and more general) measures into reproducing kernel Hilbert spaces (RKHS). This framework has recently been developed, leading various new methods for nonparametric inference \cite{MFDS12,SGSS07,Song_etal_IEEE_SPM2013}.  It is known \cite{SFL11} that, with an appropriate choice of kernels, a signed measure can be uniquely represented by the Bochner integral of the feature vectors with respect to the measure. Since a persistence diagram can be regarded as a non-negative measure, it can be embedded into an RKHS by the Bochner integral. Once such a vector representation is obtained, we can introduce any kernel methods for persistence diagrams systematically.
\begin{figure}[htbp]
\begin{center}
\includegraphics[width=80mm]{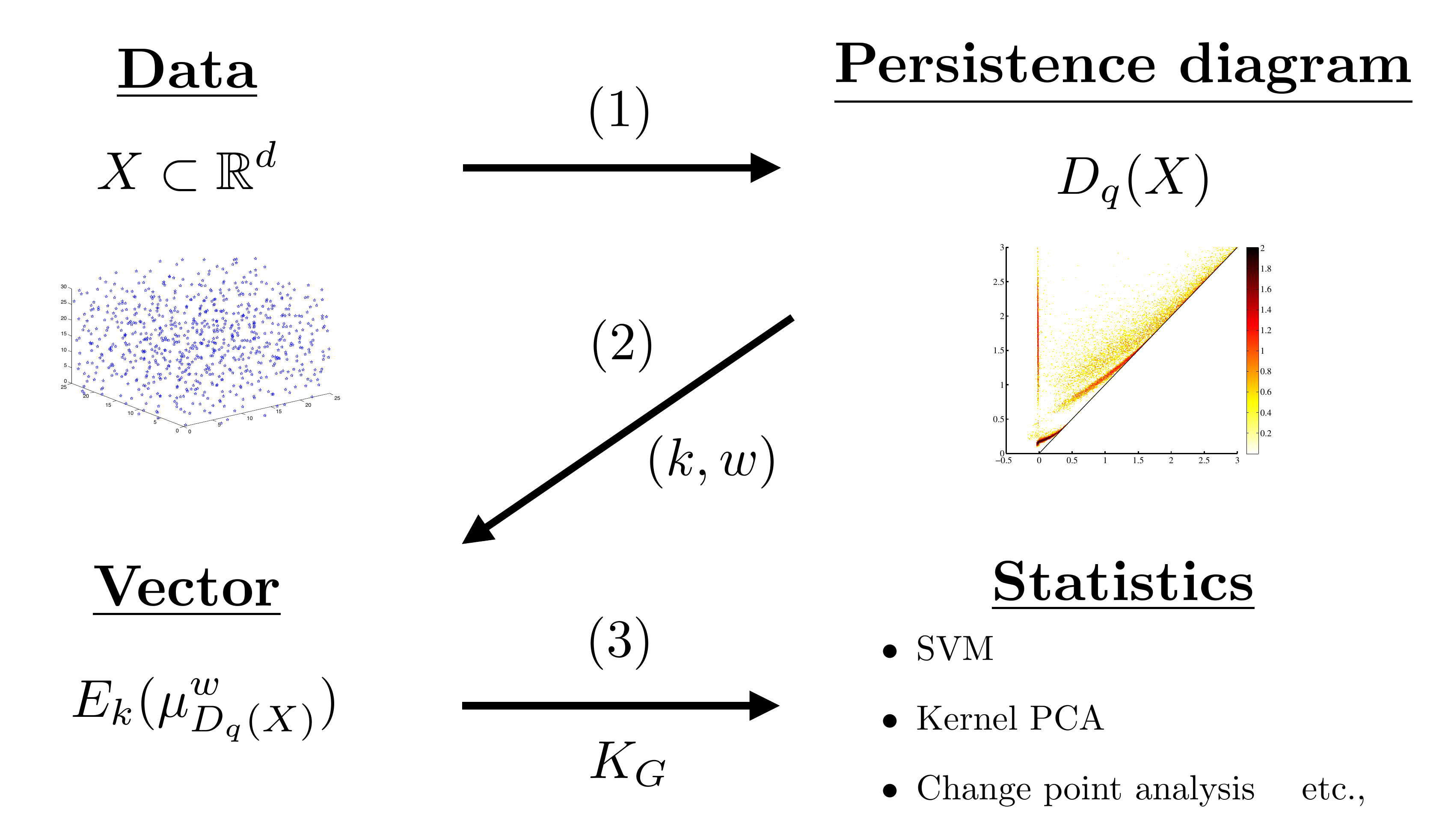}
\caption{
(1) A data $X$ is transformed into a persistence diagram $D_q(X)$ (Section \ref{subsub:persistence_diagram}).
(2) $D_q(X)$ is mapped to a vector $E_{k_{G}}(\mu_{D_{q}(X)}^{w_{{\rm arc}}})$, where $k$ is a kernel and $w$ is a weight controlling the effect of persistence (Section \ref{subsec:vectorization}).
(3) This vector provides statistical methods for persistence diagrams (Section \ref{sec:experiment}).}
\label{fig:overview}
\vspace{-6mm}
\end{center}
\end{figure}

For embedding persistence diagrams in an RKHS, we propose a useful class of positive definite kernels, called {\em persistence weighted Gaussian kernel} (PWGK).  It is important that the PWGK can discount the contributions of generators close to the diagonal (small persistence), since in many applications those generators are likely to be noise. The advantages of this kernel are as follows. (i) We can explicitly control the effect of persistence, and hence, discount the noisy generators appropriately in statistical analysis. (ii) As a theoretical contribution, the distance defined by the RKHS norm for the PWGK satisfies the stability property, which ensures the continuity from data to the vector representation of the persistence diagram. (iii) The PWGK allows efficient computation by using the random Fourier features \cite{RR07}, and thus it is applicable to persistence diagrams with a large number of generators, which are seen in practical examples (Section \ref{sec:experiment}).

We demonstrate the performance of the proposed kernel method with synthesized and real-world data, including protein datasets (taken by NMR and X-ray crystallography experiments) and oxide glasses (taken by molecular dynamics simulations).  We remark that these real-world problems have biochemical and physical significance in their own right, as detailed in Section \ref{sec:experiment}.

There are already some relevant works on statistical approaches to persistence diagrams.  Some studies discuss how to transform a persistence diagram to a vector \cite{Bu15,CMWOXW15,COO15,RT15}.  In these methods, a transformed vector is typically expressed in a Euclidean space ${\mathbb R}^k$ or a function space $L^p$, and simple and ad-hoc summary statistics like means and variances are used for data analysis such as principal component analysis and support vector machines.   The most relevant to our method is \cite{RHBK15} (see also \cite{KHNLB_NIPS2015}), where they vectorize a persistence diagram by using the difference of two Gaussian kernels evaluated at symmetric points with respect to the diagonal so that it vanishes on the diagonal.  We will show detailed comparisons between this method and ours. 
Additionally, there are some works discussing statistical properties of persistence diagrams for random data points: \cite{CGLM14} show convergence rates of persistence diagram estimation, and \cite{FLRWBS14} discuss confidence sets in a persistence diagram. These works consider a different but important direction to the statistical methods for persistence diagrams.

%Additionally, \cite{FLRWBS14} discuss confidence sets in a persistence diagram assuming random data points, which is a different but important direction to the statistical methods for persistence diagrams.

The remaining of this paper is organized as follows.  In Section \ref{sec:background}, we review some basics on persistence diagrams and kernel embedding methods.  In Section \ref{sec:pdkernel},  the PWGK is proposed, and some theoretical and computational issues are discussed.  Section \ref{sec:experiment} shows experimental results, and compares the proposed kernel method with other methods.

\section{Background}
\label{sec:background}

We review the concepts of persistence diagrams and kernel methods.  For readers who are not familiar with algebraic topology and homology, we give a brief summary in Supplementary material.  See also \cite{Ha01} as an accessible introduction to algebraic topology.

\subsection{Persistence diagram}
\label{subsub:persistence_diagram}

Let $X = \{\bm{x}_1, \ldots, \bm{x}_n \}$ be a finite subset in a metric space $(M,d_{M})$.
To analyze topological properties of $X$, let us consider a fattened ball model $X_{r}=\bigcup_{i=1}^{n} B(\bm{x}_{i};r)$ consisting of balls $B(\bm{x}_i;r)=\{ \bm{x} \in M \mid d_{M}(\bm{x}_i,\bm{x}) \leq r\}$ with radius $r$, and use the {\em homology} $H_{q}(X_{r})$ to describe the topology of $X_{r}$.
Here, for a topological space $S$, its {\em $q$-th homology} $H_{q}(S) \ (q=0,1,\ldots)$ is defined as a vector space, and its dimension $\dim H_{q}(S)$ counts the number of connected components $(q=0)$, rings $(q=1)$, cavities $(q=2)$, and so on\footnote{Throughout this paper we use a field coefficient for homology.}.  For the precise definition of homology, see Supplementary material.
For example, $X_{0.21}$ in Figure \ref{fig:filtration} consists of one connected component and two rings, and hence $\dim H_0(H_{0.21})=1$ and $\dim H_1(X_{0.21})=2$.

Because of $X_{r} \subset X_{s}$ for $r \leq s$, the set $\lX=\{X_{r} \mid r \geq 0\}$ becomes a filtration\footnote{A {\em filtration} is a family of subsets $\{X_a \mid a \in A \}$ indexed by a totally ordered set $A$ such that $X_a\subset X_b$ for $a\leq b$.}.
When the radius changes as in Figure \ref{fig:filtration}, a new generator $\alpha_i \in H_{q}(X_{r})$ appears at some radius $r=b_i$ and disappears at a radius $r=d_i$ larger than $b_i$ (called {\em birth} and {\em death}, respectively).
By gathering all generators $\alpha_{i} \  (i \in I)$ in the filtration $\lX$, we obtain the collection of these birth-death pairs $\underline{D}_{q}(X)=\{ (b_{i},d_{i}) \in \lR^{2} \mid i \in I \}$ as a multi-set\footnote{A {\em multi-set} is a set with multiplicity of each point.  We regard a persistence diagram as a multi-set, since several generators can have the same birth-death pairs.}.
The {\em persistence diagram} $D_{q}(X)$ is defined by the disjoint union of $\underline{D}_{q}(X)$ and the diagonal set $\DD=\{(a,a) \mid a \in \lR\}$ counted with infinite multiplicity.
A point $x=(b,d) \in D_q(X)$ is also called a {\em generator} of the persistence diagram.
The {\em persistence} $\pers(x):=d-b$ of $x$ is its lifetime and measures the robustness of $x$ in the filtration. 
We will see shortly that the diagonal set is included in a persistence diagram to simplify the definition of a distance on persistence diagrams.

%back
%More formally, a persistence diagram is defined as a visual representation of persistence homology, which is defined by a filtration of topological spaces.  See Supplementary material A.3.  We omit the details here, since the contributions of this paper can be fully explained in terms of persistence diagrams. 

Figure \ref{fig:pd} shows the persistence diagram $D_1(X)$ of $X$ given in Figure \ref{fig:filtration}. The generators $x_1$ and $x_2$ correspond to the rings $\alpha_1$ and $\alpha_2$ in Figure \ref{fig:filtration}, respectively. The persistence of $x_{2}$ is the longest, while
the other generators including $x_{1}$ have small persistences, implying that they can be seen as noisy rings.
Although there are no topological rings in $X$ itself, the persistence diagram $D_{1}(X)$ shows that there is a robust ring $\alpha_{2}$ and several noisy rings in $\lX$.
In this way, the persistence diagram provides an informative topological summary of $X_{r}$ over all $r$.

We remark that, in the finite fattened ball model, there is  only one generator in $D_0(X)$ which does not disappear in the filtration; its lifetime is $\infty$. Thus, from now on, we deal with $D_0(X)$ by removing this infinite lifetime generator in order to simplify the notation\footnote{This is called the {\em reduced persistence diagram}.}. We also note that the cardinality of $\underline{D}_{q}(X)$ obtained from the finite fattened ball model is finite.

\subsection{Stability with respect to {\boldmath $d_{B}$}}\label{sec:bottleneck_stability}

%It often happens that data obtained from measurements includes some noise, and thus its topology may change from that of underlying geometric structure. In such a case, 
Any statistical data involve noise or stochasticity, and thus it is desired that the persistence diagrams are stable under perturbation of data.
A popular measure to study the similarity between two persistence diagrams $D$ and $E$ is the {\em bottleneck distance}
\[
d_{B}(D,E):=\inf_{\gamma} \sup_{x \in D} \norm{x-\gamma(x)}_{\infty},
\]
where $\gamma$ ranges over all multi-bijections\footnote{A {\em multi-bijection} is a bijective map between two multi-sets counted with their multiplicity.} from $D$ to $E$\footnote{For $z=(z_1,z_2)\in\lR^2$, $\Vert z \Vert_\infty$ denotes $\max(|z_1|,|z_2|)$. }. 
Note that the cardinalities of $D$ and $E$ are equal by considering the diagonal set $\Delta$ with infinite multiplicity.  
As a distance between finite sets $X,Y$ in a metric space $M$, let us recall the {\em Hausdorff distance} given by
\[
d_{H}(X,Y):= \max \left\{\sup_{\bm{x} \in X} \inf_{\bm{y} \in Y} d_{M}(\bm{x},\bm{y}), \sup_{\bm{y} \in Y} \inf_{\bm{x} \in X} d_{M}(\bm{x},\bm{y}) \right\}.
\]
Then, we have the following stability property (for more general settings, see \cite{CdSO14}).
\begin{prop}{\cite{CdSO14,CEH07}}
\label{prop:point_stability}
Let $X$ and $Y$ be finite subsets in a metric space $(M,d_{M})$.
Then the persistence diagrams satisfy
\[
d_{B}(D_{q}(X),D_{q}(Y)) \leq d_{H}(X,Y).
\]
\end{prop}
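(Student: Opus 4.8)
The plan is to reduce the bottleneck bound to an \emph{interleaving} of the two filtrations and then invoke the algebraic stability theorem of \cite{CEH07,CdSO14}. Write $\ee := d_{H}(X,Y)$. The key geometric observation is that the Hausdorff bound forces the fattened-ball filtrations of $X$ and $Y$ to be $\ee$-interleaved, in the sense that $X_{r} \subset Y_{r+\ee}$ and $Y_{r} \subset X_{r+\ee}$ for every $r \geq 0$. Everything else in the proof is then a formal consequence of applying the homology functor and quoting the known algebraic result.

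First I would establish these two inclusions directly from the triangle inequality. Fix $r \geq 0$ and take $\bm{z} \in X_{r}$, so that $d_{M}(\bm{x}_{i},\bm{z}) \leq r$ for some $\bm{x}_{i} \in X$. By the definition of $d_{H}$ there is a point $\bm{y}_{j} \in Y$ with $d_{M}(\bm{x}_{i},\bm{y}_{j}) \leq \ee$, and the triangle inequality then gives $d_{M}(\bm{y}_{j},\bm{z}) \leq r + \ee$, so $\bm{z} \in B(\bm{y}_{j};r+\ee) \subset Y_{r+\ee}$. The reverse inclusion $Y_{r} \subset X_{r+\ee}$ follows by the symmetric argument, using the other term in the max defining $d_{H}$.

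Next I would apply the homology functor $H_{q}$. Since every map in sight is an inclusion of spaces, functoriality turns the nested inclusions $X_{r} \subset Y_{r+\ee} \subset X_{r+2\ee} \subset \cdots$ into commuting triangles of linear maps; composing the two induced maps agrees with the internal structure map of the persistence module, which is exactly the statement that $\{H_{q}(X_{r})\}_{r}$ and $\{H_{q}(Y_{r})\}_{r}$ are $\ee$-interleaved. At this point the desired conclusion $d_{B}(D_{q}(X),D_{q}(Y)) \leq \ee$ is precisely the algebraic stability theorem, which converts an $\ee$-interleaving of persistence modules into a multi-bijection $\gamma$ matching the persistence diagrams with $\norm{x-\gamma(x)}_{\infty} \leq \ee$ for all $x$, the diagonal $\DD$ absorbing any unmatched generators.

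The main obstacle is the algebraic stability theorem itself: producing the matching $\gamma$ that realizes the bottleneck bound requires a nontrivial structural analysis of the interleaved modules, not merely the existence of the interleaving diagram. Since this is exactly what \cite{CEH07,CdSO14} supply, I would treat it as a black box, so that the genuinely hands-on content of the proof is the elementary triangle-inequality step producing the interleaving. The only additional care needed is to note that $\underline{D}_{q}(X)$ and $\underline{D}_{q}(Y)$ are finite in the finite fattened-ball model, which guarantees that the matching is well-defined.
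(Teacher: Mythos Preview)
Your argument is correct. The paper does not prove this proposition in the main text; in the supplementary material it derives it as a one-line corollary of the sublevel-set stability theorem (Proposition~\ref{prop:function_stability}) by observing that $D_{q}(X)=D_{q}(f_{X})$ for the distance function $f_{X}(\bm{x})=\min_{\bm{x}_{i}\in X} d_{M}(\bm{x},\bm{x}_{i})$ and that $\norm{f_{X}-f_{Y}}_{\infty}=d_{H}(X,Y)$.

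Your route and the paper's are two packagings of the same content. The inclusions $X_{r}\subset Y_{r+\ee}$ you establish are exactly the sublevel-set inclusions that follow from $\norm{f_{X}-f_{Y}}_{\infty}\leq \ee$, so your triangle-inequality step is equivalent to the paper's claim about the sup-norm of the distance functions. The difference is only in which black box is invoked at the end: the paper appeals to the function-level stability of \cite{CEH07}, while you pass through an explicit $\ee$-interleaving of persistence modules and invoke the algebraic stability theorem of \cite{CdSO14}. The paper's phrasing is slightly more economical because the identification $D_{q}(X)=D_{q}(f_{X})$ lets it quote a single cited inequality; your version has the advantage of making the interleaving visible and of not requiring the ambient space $M$ to support tame continuous functions, since the algebraic stability theorem applies directly to the modules.
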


Proposition \ref{prop:point_stability} provides a geometric intuition of the stability of persistence diagrams.
Assume that $X$ is the true location of points and $Y$ is a data obtained from skewed measurement with $\ee=d_{H}(X,Y)$ (Figure \ref{fig:stability}).
If there is a point $(b,d) \in D_{q}(Y)$,
then we can find at least one generator in $X$ which is born in $(b-\ee,b+\ee)$ and dies in $(d-\ee,d+\ee)$.
Thus, the stability guarantees the similarity of two persistence diagrams, and hence %allows us to study some features in one persistence diagram by another.
we can infer the true topological features from one persistence diagram.
\begin{figure}[htbp]
\begin{center}
\includegraphics[width=80mm]{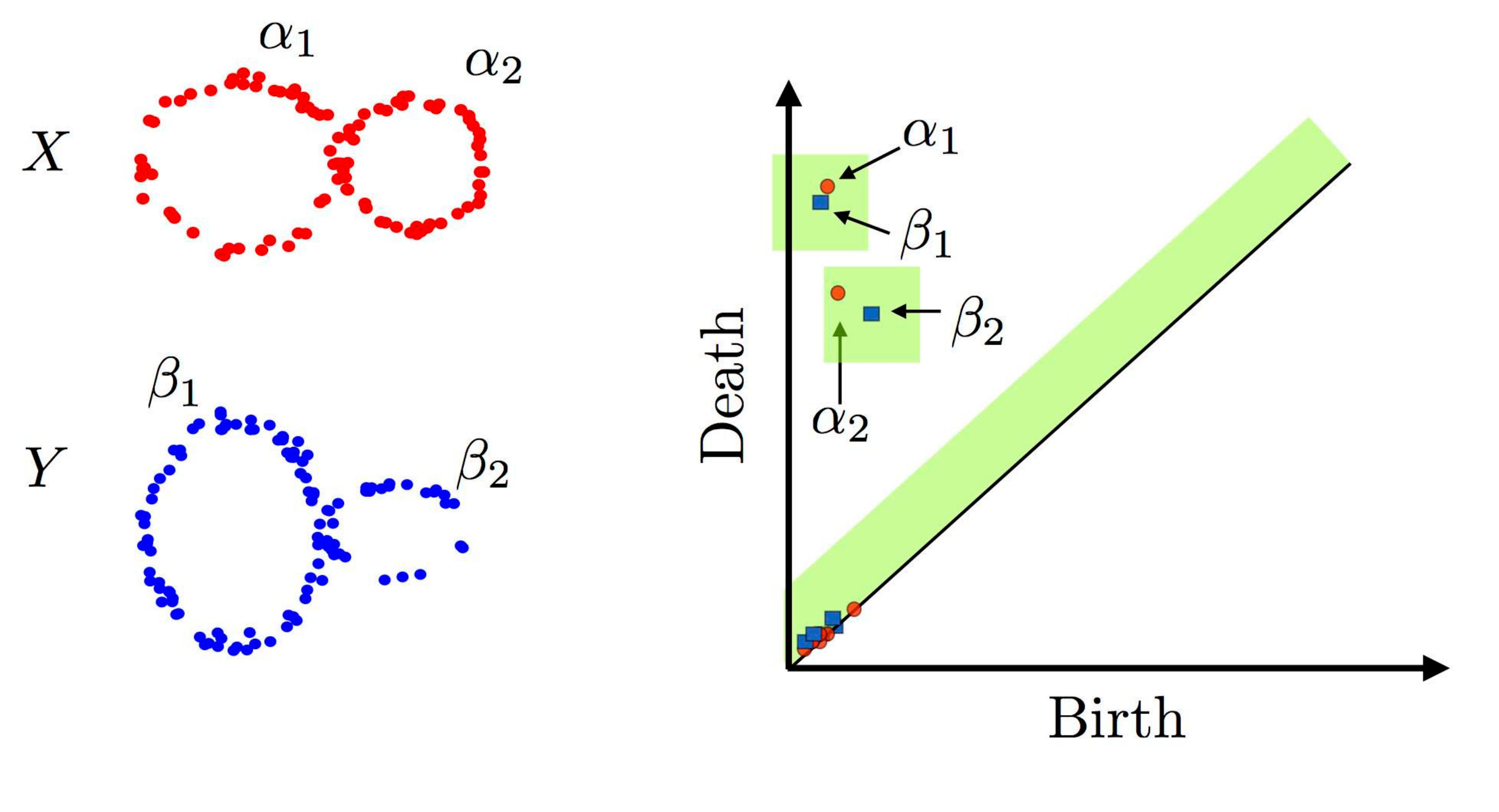}
\caption{Two data $X$ and $Y$ (left) and their persistence diagrams (right). The green region is an $\ee$-neighborhood of $D_{q}(Y)$.}
\label{fig:stability}
\end{center}
\end{figure}

\subsection{Kernel methods for representing signed measures}
\label{subsec:universal}

Let $\Omega$ be a set and $k:\Omega \times \Omega \ra \lR$ be a {\em positive definite kernel} on $\Omega$, i.e., $k$ is symmetric, and
for any number of points $x_{1},\ldots,x_{n}$ in $\Omega$, the Gram matrix $\pare{k(x_{i},x_{j})}_{i,j=1,\ldots,n}$ is nonnegative definite.
%It is well known that the linear kernel $k_{L}(x,y)=\inn{x}{y}$ and 
A popular example of positive definite kernel on $\lR^{d}$ is the Gaussian kernel $k_{G}(x,y)=e^{-\frac{\norm{x-y}^{2}}{2 \sigma^{2}}} \ (\sigma>0)$, where $\norm{\cdot}$ is the Euclidean norm in $\lR^{d}$.  It is also known that every positive definite kernel $k$ on $\Omega$ is uniquely associated with a reproducing kernel Hilbert space $\cH_{k}$ (RKHS).
%, for which $k$ serves as a reproducing kernel, i.e., $\inn{f}{k(\cdot,x)}_{\cH_{k}}=f(x)$ for each $x \in \Omega$ and $f \in \cH_{k}$. There are many works using positive definite kernels for data analysis; they are called kernel methods (see \cite{SS98} as a reference).

We use a positive definite kernel to represent persistence diagrams by following the idea of the kernel mean embedding of distributions \cite{SGSS07,SFL11}.  Let $\Omega$ be a locally compact Hausdorff space, $M_{b}(\Omega)$ be the space of all finite signed Radon measures on $\Omega$, and $k$ be a bounded measurable kernel on $\Omega$.
Then we define a mapping from $M_{b}(\Omega)$ to $\cH_{k}$ by
\begin{equation}\label{eq:E_k}
E_{k}:M_{b}(\Omega) \ra \cH_{k}, ~~ \mu \mapsto \int k(\cdot, x) d \mu(x).
\end{equation}
The integral should be understood as the Bochner integral \cite{DU77}, which exists here, since $\int \norm{k(\cdot,x)}_{\cH_{k}} d \mu(x)$ is finite.
%From the boundedness of $k$ and the finiteness of $\mu$, the norm $\norm{k(\cdot,x)}_{\cH_{k}}=\sqrt{k(x,x)}$ is integrable.

For a locally compact Hausdorff space $\Omega$,
let $C_{0}(\Omega)$ denote the space of continuous functions vanishing at infinity\footnote{A function $f$ is said to vanish at infinity if for any $\ee >0$ there is a compact set $K \subset \Omega$ such that $\sup_{x \in K^{c}} |f(x)| \leq \ee$.}.
A kernel $k$ on $\Omega$ is said to be $C_{0}$-kernel if $k(x,x)$ is of $C_{0}(\Omega)$ as a function of $x$.
If $k$ is $C_{0}$-kernel, the associated RKHS $\cH_{k}$ is a subspace of $C_{0}(\Omega)$.
A $C_{0}$-kernel $k$ is called {\em $C_{0}$-universal} if $\cH_{k}$ is dense in $C_{0}(\Omega)$.
It is known that the Gaussian kernel $k_{G}$ is $C_{0}$-universal on $\lR^{d}$ \cite{SFL11}.  When $k$ is $C_{0}$-universal, by the mapping \eqref{eq:E_k}, the vector $E_k(\mu)$ in the RKHS uniquely determines the finite signed measure $\mu$, and thus serves as a representation of $\mu$.
\begin{prop}[\cite{SFL11}]
\label{prop:C0_distance}
If $k$ is $C_{0}$-universal, the mapping $E_{k}$ is injective. Thus,
\[
d_{k}(\mu,\nu)=\norm{E_{k}(\mu)-E_{k}(\nu)}_{\cH_{k}}
\]
defines a distance on $M_{b}(\Omega)$.
\end{prop}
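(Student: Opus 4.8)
The plan is to establish injectivity of $E_{k}$ first, since once that is secured the metric axioms for $d_{k}$ follow immediately. Because $E_{k}$ is linear, injectivity is equivalent to showing that its kernel is trivial: if $\xi \in M_{b}(\Omega)$ satisfies $E_{k}(\xi)=0$, then $\xi=0$. Applying this to $\xi=\mu-\nu$ recovers the stated implication $E_{k}(\mu)=E_{k}(\nu)\Rightarrow\mu=\nu$.

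First I would connect the Hilbert-space vector $E_{k}(\xi)$ to the action of $\xi$ on test functions. For any $f\in\cH_{k}$, the reproducing property $f(x)=\inn{k(\cdot,x)}{f}_{\cH_{k}}$ together with the fact that $E_{k}(\xi)$ is a Bochner integral lets me commute the continuous linear functional $\inn{\cdot}{f}_{\cH_{k}}$ with the integral:
\[
\inn{E_{k}(\xi)}{f}_{\cH_{k}} = \int \inn{k(\cdot,x)}{f}_{\cH_{k}} \, d\xi(x) = \int f(x)\, d\xi(x).
\]
Consequently, $E_{k}(\xi)=0$ is equivalent to $\int f\, d\xi = 0$ for every $f\in\cH_{k}$.

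Next I would upgrade this vanishing from $\cH_{k}$ to all of $C_{0}(\Omega)$. Since $\xi$ is a finite signed Radon measure, the map $f\mapsto\int f\, d\xi$ is a bounded linear functional on $C_{0}(\Omega)$ with respect to the supremum norm. The hypothesis that $k$ is $C_{0}$-universal means $\cH_{k}$ is dense in $C_{0}(\Omega)$ in that norm, so a bounded functional vanishing on the dense subspace $\cH_{k}$ must vanish on all of $C_{0}(\Omega)$. Finally, the Riesz representation theorem identifies the dual of $C_{0}(\Omega)$ with $M_{b}(\Omega)$, so the functional $f\mapsto\int f\, d\xi$ being identically zero forces $\xi=0$. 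This proves injectivity. For the metric claim I would then simply verify the axioms: symmetry and the triangle inequality are inherited directly from $\norm{\cdot}_{\cH_{k}}$ via linearity of $E_{k}$, non-negativity is automatic, and $d_{k}(\mu,\nu)=0$ forces $E_{k}(\mu)=E_{k}(\nu)$, whence $\mu=\nu$ by the injectivity just established.

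The main obstacle I anticipate is the justification of interchanging the inner product and the integral in the displayed identity; this is precisely where the Bochner-integrability noted after \eqref{eq:E_k} — the finiteness of $\int \norm{k(\cdot,x)}_{\cH_{k}}\, d\xi(x)$, which holds because $k$ is bounded and $\xi$ is finite — is essential, as it guarantees that $E_{k}(\xi)$ is a well-defined element of $\cH_{k}$ and that continuous linear functionals pass inside the integral. The density step is conceptually the heart of the argument, but it is a clean consequence of $C_{0}$-universality once the functional $f\mapsto\int f\, d\xi$ is recognized as bounded on $C_{0}(\Omega)$.
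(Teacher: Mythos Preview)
Your argument is correct and is essentially the standard proof of this fact as given in the cited reference [SFL11]. Note, however, that the paper itself does not supply a proof of this proposition: it is stated as a cited result, with no argument appearing in either the main text or the supplementary material. So there is no ``paper's proof'' to compare against beyond the original source, and your route---reproducing property to pass from $E_k(\xi)=0$ to $\int f\,d\xi=0$ on $\cH_k$, density in $C_0(\Omega)$ via $C_0$-universality, then Riesz representation to force $\xi=0$---is exactly the standard one.
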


\section{Kernel methods for persistence diagrams}
\label{sec:pdkernel}
We propose a kernel for persistence diagrams, called the {\it Persistence Weighted Gaussian Kernel} (PWGK),  to embed the diagrams into an RKHS.  This vectorization of persistence diagrams enables us to apply any kernel methods to persistence diagrams. We show the stability theorem with respect to the distance defined by the embedding, and discuss efficient computation of the PWGK.

\subsection{Persistence weighted Gaussian kernel}
\label{subsec:vectorization}

We propose a method for vectorizing persistence diagrams using the kernel embedding \eqref{eq:E_k} by regarding a persistence diagram as a discrete measure.  In vectorizing persistence diagrams, it is important to discount the effect of generators located near the diagonal, since they tend to be caused by noise.  To this end, we explain slightly different two ways of embeddings, which turn out to introduce the same inner products for two persistence diagrams. 

First, for a persistence diagram $D$, we introduce a weighted measure $\mu^{w}_{D}:=\sum_{x \in \underline{D}} w(x)\dd_{x}$ with a weight $w(x) >0$ for each generator $x \in \underline{D}$ (Figure \ref{fig:weighted}), where $\dd_x$ is the Dirac delta measure at $x$. The weight function $w(x)$ discounts the effect of generators close to the diagonal, and a concrete choice will be discussed later.
\begin{figure}[htbp]
\begin{center}
\includegraphics[width=80mm]{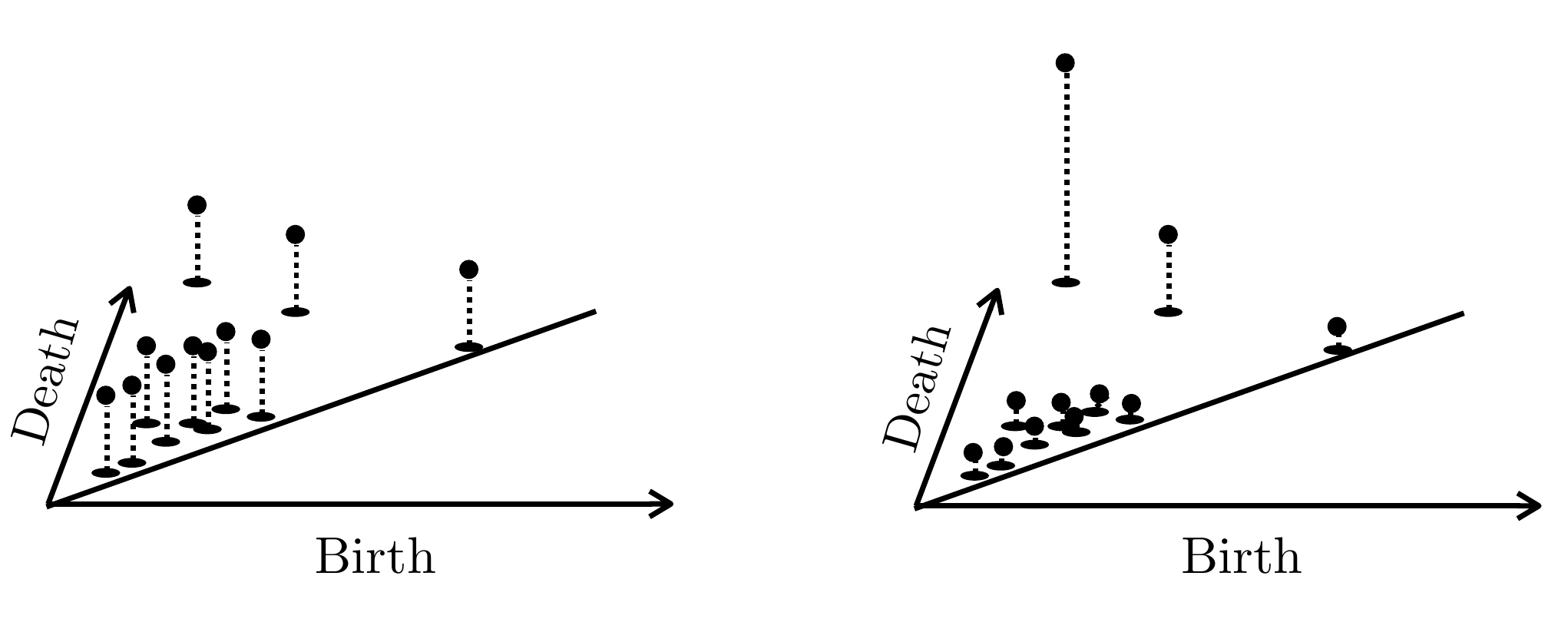}
\caption{Unweighted (left) and weighted (right) measures.}
\vspace{-3mm}
\label{fig:weighted}
\end{center}
\end{figure}
As discussed in Section \ref{subsec:universal}, given a $C_0$-universal kernel $k$ on $\lR^{2}_{ul}:=\{(b,d)\in\lR^2\mid b<d\}$, the measure $\mu^{w}_{D}$ can be embedded as an element of the RKHS $\cH_{k}$ via 
\begin{equation}\label{E_k:embed}
\mu^{w}_{D} \mapsto E_{k}(\mu^{w}_{D}):=\sum_{x\in D}w(x)k(\cdot,x).
\end{equation}
From Proposition \ref{prop:C0_distance}, this mapping does not lose any information about persistence diagrams, and $E_k(\mu^w_D)\in\cH_k$ serves as a representation of the persistence diagram.

As the second construction, let
\[
k^w(x,y):=w(x)w(y)k(x,y)
\]
be the weighted kernel with the same weight function as above, and consider the mapping 
\begin{equation}\label{E_k^w:embed}
E_{k^w}: \mu_D\mapsto \sum_{x\in D}w(x)w(\cdot)k(\cdot,x)\in \cH_{k^w}.
\end{equation}
This also defines vectorization of persistence diagrams, and it is essentially equivalent to the first one, as seen from the next proposition (See Supplementary material for the proof.).

\begin{prop}\label{prop:iso}
The following mapping
\[
\cH_k\to \cH_{k^w}, \quad f\mapsto wf
\]
defines an isomorphism between the RKHSs.  Under this isomorphism, $E_k(\mu_D^w)$ and $E_{k^w}(\mu_D)$ are identified. 
\end{prop}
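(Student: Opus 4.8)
The plan is to identify the target space $\cH_{k^w}$ through its reproducing kernel and then invoke the uniqueness part of the Moore--Aronszajn theorem. First I would introduce the candidate space $\cG:=\{wf \mid f\in\cH_k\}$, equipped with the inner product $\inn{wf_1}{wf_2}_{\cG}:=\inn{f_1}{f_2}_{\cH_k}$, together with the linear map $\Phi:\cH_k\to\cG$, $f\mapsto wf$. Because $w(x)>0$ at every point, $\Phi$ is injective on functions (if $wf\equiv 0$ then $f\equiv 0$), so the inner product on $\cG$ is well defined and $\Phi$ is a bijective linear isometry onto $\cG$. Completeness of $\cG$ is then inherited from $\cH_k$ through this isometry, so $\cG$ is a Hilbert space of functions.

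Next I would verify that $\cG$ is an RKHS whose reproducing kernel is exactly $k^w$. The key computation is $k^w(\cdot,y)=w(y)\,w(\cdot)\,k(\cdot,y)=w(y)\,\Phi\bigl(k(\cdot,y)\bigr)$, which shows $k^w(\cdot,y)\in\cG$ since $k(\cdot,y)\in\cH_k$. For the reproducing property, take any $g=wf\in\cG$ and combine the defining isometry of $\cG$ with the reproducing property in $\cH_k$:
\[
\inn{g}{k^w(\cdot,y)}_{\cG}=w(y)\inn{\Phi(f)}{\Phi(k(\cdot,y))}_{\cG}=w(y)\inn{f}{k(\cdot,y)}_{\cH_k}=w(y)f(y)=(wf)(y)=g(y).
\]
Thus $\cG$ is an RKHS with reproducing kernel $k^w$. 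One should also note in passing that $k^w$ is genuinely positive definite, since $\sum_{i,j}c_ic_jk^w(x_i,x_j)=\sum_{i,j}(c_iw(x_i))(c_jw(x_j))k(x_i,x_j)\ge 0$, so that $\cH_{k^w}$ indeed exists.

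By the uniqueness of the RKHS attached to a given kernel (Moore--Aronszajn), the space determined by $k^w$ is unique, whence $\cG=\cH_{k^w}$ with identical inner products. This shows that $\Phi:\cH_k\to\cH_{k^w}$, $f\mapsto wf$, is an isometric isomorphism. Finally, to identify the two embeddings I would simply apply $\Phi$ to $E_k(\mu_D^w)$ and compare against the definition \eqref{E_k^w:embed}:
\[
\Phi\bigl(E_k(\mu_D^w)\bigr)=w\sum_{x\in D}w(x)k(\cdot,x)=\sum_{x\in D}w(x)\,w(\cdot)\,k(\cdot,x)=E_{k^w}(\mu_D).
\]

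I do not expect a serious obstacle here: the argument is essentially a bookkeeping verification resting on Moore--Aronszajn. The only point demanding care is the well-definedness of the inner product on $\cG$, which relies entirely on the strict positivity $w>0$; were $w$ permitted to vanish somewhere, $\Phi$ would fail to be injective and the two spaces would no longer be identified by multiplication by $w$.
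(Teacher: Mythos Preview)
Your proposal is correct and follows essentially the same route as the paper: define the candidate space $\cG=\{wf\mid f\in\cH_k\}$ with the transported inner product, use $w>0$ to see it is a well-defined Hilbert space isometric to $\cH_k$, verify that $k^w$ is its reproducing kernel, and conclude $\cG=\cH_{k^w}$ by uniqueness; the identification of the two embeddings is then immediate. Your write-up is in fact slightly more careful than the paper's (you explicitly check $k^w(\cdot,y)\in\cG$ and note positive definiteness of $k^w$), but the argument is the same.
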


Note that under the identification of Proposition \ref{prop:iso}, we have 
\[
\langle E_k(\mu_D^w), E_k(\mu_E^w)\rangle_{\cH_k} = \langle E_{k^w}(\mu_D), E_{k^w}(\mu_E)\rangle_{\cH_{k^w}}, 
\]
and thus the two constructions introduce the same similarity (and hence distance) among persistence diagrams. 
We apply methods of data analysis to vector representations $E_k(\mu_D^w)$ or $E_{k^w}(\mu_D)$.  The first construction may be more intuitive by the direct weighting of a measure, while the second one is also practically useful since all the parameter tuning is reduced to kernel choice.

For a practical purpose, we propose to use the Gaussian kernel $k_{G}(x,y)=e^{-\frac{\norm{x-y}^{2}}{2 \sigma^{2}}} \ (\sigma>0)$ for $k$, and $w_{{\rm arc}}(x)=\arctan(C \pers(x)^{p}) \ (C,p >0)$ for a weight function. The corresponding positive definite kernel is
\begin{equation}
k_{PWG}(x,y)=w_{{\rm arc}}(x)w_{{\rm arc}}(y)e^{-\frac{\norm{x-y}^{2}}{2 \sigma^{2}}}.
\end{equation}
We call it {\em Persistence Weighted Gaussian Kernel} (PWGK).
Since the Gaussian kernel is $C_{0}$-universal and $w>0$ on $\lR^2_{ul}$,
$d^{w_{{\rm arc}}}_{k_{G}}(D,E):=\Vert E_{k_{G}}(\mu_D^{w_{{\rm arc}}})-E_{k_{G}}(\mu_E^{w_{{\rm arc}}})\Vert_{\cH_{k_{G}}}$ defines a distance on the persistence diagrams.
We also note that $w_{{\rm arc}}$ is an increasing function with respect to persistence. Hence, a noisy (resp. essential) generator $x$ gives a small (resp. large) value $w_{{\rm arc}}(x)$. By adjusting the parameters $C$ and $p$, we can control the effect of the persistence.

\subsection{Stability with respect to {\boldmath $d^{w_{\rm arc}}_{k_G}$}}
\label{subsec:stability}
Given a data $X$, we vectorize the persistence diagram $D_q(X)$ as an element $E_{k_{G}}(\mu_{D_{q}(X)}^{w_{{\rm arc}}})$ of the RKHS. Then, for practical applications, this map $X \mapsto E_{k_{G}}(\mu_{D_{q}(X)}^{w_{{\rm arc}}})$ should be stable with respect to perturbations to the data as discussed in Section \ref{sec:bottleneck_stability}.
The following theorem shows that the map has the desired property (See Supplementary material for the proof.).

\begin{thm}
\label{thm:kernel_stability}
Let $M$ be a compact subset in $\lR^{d}$, $X,Y \subset M$ be finite subsets and $p>d+1$.
Then
\[
d^{w_{{\rm arc}}}_{k_{G}}(D_{q}(X),D_{q}(Y) ) \leq L(M,d;C,p,\sigma) d_{H}(X,Y),
\]
where $L(M,d;C,p,\sigma)$ is a constant depending on $M,d,C,p,\sigma$.
\end{thm}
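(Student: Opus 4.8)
The plan is to first reduce the statement to a comparison with the bottleneck distance and then estimate the kernel distance via an optimal bottleneck matching together with a total-persistence bound. Since Proposition~\ref{prop:point_stability} already gives $d_{B}(D_{q}(X),D_{q}(Y))\le d_{H}(X,Y)$, it suffices to prove an inequality of the form $d^{w_{{\rm arc}}}_{k_{G}}(D,E)\le L\, d_{B}(D,E)$ for arbitrary persistence diagrams $D,E$ of finite subsets of $M$, with $L=L(M,d;C,p,\sigma)$; composing the two inequalities then yields the theorem.

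To estimate the RKHS distance I would use the dual description of the norm. By the reproducing property, $\langle E_{k_{G}}(\mu),g\rangle_{\cH_{k_{G}}}=\int g\,d\mu$ for every $g\in\cH_{k_{G}}$, so
\[
d^{w_{{\rm arc}}}_{k_{G}}(D,E)=\sup_{\norm{g}_{\cH_{k_{G}}}\le 1}\abs{\ \sum_{x\in\underline{D}}w_{{\rm arc}}(x)g(x)-\sum_{y\in\underline{E}}w_{{\rm arc}}(y)g(y)\ }.
\]
The first elementary observations are that any $g$ in the unit ball of $\cH_{k_{G}}$ is uniformly bounded, $\abs{g(x)}\le\sqrt{k_{G}(x,x)}=1$, and Lipschitz, $\abs{g(x)-g(y)}\le\norm{k_{G}(\cdot,x)-k_{G}(\cdot,y)}_{\cH_{k_{G}}}\le\norm{x-y}/\sigma\le(\sqrt{2}/\sigma)\norm{x-y}_{\infty}$, using $1-e^{-t}\le t$. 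I would also record that $w_{{\rm arc}}(x)=\arctan(C\,\pers(x)^{p})\le C\,\pers(x)^{p}$, that $\pers$ is $2$-Lipschitz in $\norm{\cdot}_{\infty}$, and hence that $w_{{\rm arc}}$ is Lipschitz with a constant controlled by $C,p$ and $\diam M$.

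Next I would fix an optimal bottleneck matching $\gamma$, viewed as a bijection between the full diagrams (diagonal included, where $w_{{\rm arc}}=0$), so that $\norm{x-\gamma(x)}_{\infty}\le d_{B}(D,E)$ for all $x$. Rewriting the difference in the display above as $\sum_{x}\bigl(w_{{\rm arc}}(x)g(x)-w_{{\rm arc}}(\gamma(x))g(\gamma(x))\bigr)$ and splitting each summand as $w_{{\rm arc}}(x)\bigl(g(x)-g(\gamma(x))\bigr)+\bigl(w_{{\rm arc}}(x)-w_{{\rm arc}}(\gamma(x))\bigr)g(\gamma(x))$, the boundedness and Lipschitz estimates reduce everything to three sums: $\sum w_{{\rm arc}}(x)\norm{x-\gamma(x)}_{\infty}$, $\sum\abs{w_{{\rm arc}}(x)-w_{{\rm arc}}(\gamma(x))}$, and the contribution of generators matched to the diagonal, for which $\pers(x)\le 2d_{B}$ forces a small weight.

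The main obstacle is that the number of generators is not bounded uniformly over finite $X\subset M$, so a plain triangle inequality would produce a constant scaling with the cardinality of the diagram. This is exactly where the hypothesis $p>d+1$ enters. I would invoke the degree-$s$ total-persistence bound of Cohen--Steiner--Edelsbrunner--Harer--Mileyko: for a compact $M\subset\lR^{d}$ there is a constant, depending only on $M,d,s$, bounding $\sum_{x}\pers(x)^{s}$ over all finite $X\subset M$ (with the \cech{} or Vietoris--Rips filtration) whenever $s>d$. Applying it with $s=p$ controls $\sum w_{{\rm arc}}(x)\le C\sum\pers(x)^{p}$ in the first sum, and applying it with $s=p-1$ (legitimate precisely because $p-1>d$) controls the other two: using $\abs{w_{{\rm arc}}(x)-w_{{\rm arc}}(y)}\le 2Cp\max(\pers(x),\pers(y))^{p-1}\norm{x-y}_{\infty}$ together with $\max(\pers(x),\pers(y))^{p-1}\le\pers(x)^{p-1}+\pers(y)^{p-1}$ bounds the second sum by $L\,d_{B}$, while $\pers(x)^{p}\le 2d_{B}\,\pers(x)^{p-1}$ bounds the diagonal contribution by $L\,d_{B}$. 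Collecting constants gives $d^{w_{{\rm arc}}}_{k_{G}}(D,E)\le L\,d_{B}(D,E)$ with $L$ depending only on $M,d,C,p,\sigma$, which finishes the proof. The remaining technical care lies in confirming that the total-persistence constant genuinely applies to the persistence of finite point sets in $M$ and in tracking the explicit form of $L$.
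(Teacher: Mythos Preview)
Your proposal is correct and is essentially the paper's own argument: reduce to the bottleneck distance via Proposition~\ref{prop:point_stability}, take a (near-)optimal matching $\gamma$, split each term as $w(x)\bigl(g(x)-g(\gamma(x))\bigr)+\bigl(w(x)-w(\gamma(x))\bigr)g(\gamma(x))$, and control the resulting sums by the degree-$p$ and degree-$(p{-}1)$ total-persistence bounds of Cohen--Steiner--Edelsbrunner--Harer--Mileyko (the latter being exactly why $p>d+1$ is needed). The only difference is presentational---you pass to the dual formulation $\sup_{\|g\|_{\cH_{k_G}}\le 1}$ of the RKHS norm, whereas the paper works directly with the vectors $k_G(\cdot,x)$ and the triangle inequality; your pointwise estimates $|g|\le 1$ and $|g(x)-g(y)|\le(\sqrt{2}/\sigma)\|x-y\|_\infty$ are precisely the paper's lemmas $\|k_G(\cdot,x)\|_{\cH_{k_G}}=1$ and $\|k_G(\cdot,x)-k_G(\cdot,y)\|_{\cH_{k_G}}\le(\sqrt{2}/\sigma)\|x-y\|_\infty$, so the two arguments are line-by-line equivalent.
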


Let $\cP_{{\rm finite}}(M)$ be the set of finite subsets in a compact subset $M \subset \lR^{d}$.
Since the constant $L(M,d;C,p,\sigma)$ is independent of $X$ and $Y$, Theorem \ref{thm:kernel_stability} concludes that the map
\[
\cP_{{\rm finite}}(M) \ra \cH_{k_{G}} , ~~ X \mapsto E_{k_{G}}(\mu_{D_{q}(X)}^{w_{{\rm arc}}})
\]
is Lipschitz continuous.  
To the best of our knowledge, a similar stability result has not been obtained for the other Gaussian type kernels (e.g., \cite{RHBK15} does not deal with the Hausdorff distance.). Our stability result is achieved by incorporating the weight function $w_{\rm arc}$ with appropriate choice of $p$.

\subsection{Kernel methods on RKHS}

Once  persistence diagrams are represented by the vectors in an RKHS, we can apply any kernel methods  to those vectors.  
The simplest choice is to consider the linear kernel
\begin{align*}
K_{L}(D,E)&=\inn{E_{k_{G}}(\mu_{D}^{w_{{\rm arc}}})}{E_{k_{G}}(\mu_{E}^{w_{{\rm arc}}})}_{\cH_{k_{G}}} \\
&=\sum_{x \in \underline{D}} \sum_{y \in \underline{E}} w_{{\rm arc}}(x)w_{{\rm arc}}(y)k_{G}(x,y)
\end{align*}
on the RKHS.  We can also consider a nonlinear kernel on the RKHS, such as the Gaussian
kernel:
\begin{align}
K_{G}(D, E) = \exp \pare{- \frac{d^{w_{{\rm arc}}}_{k_{G}}(D,E)^{2} }{ 2 \tau^{2} } },  \label{eq:kernel}
\end{align}
where $\tau$ is a positive parameter and 
\begin{align*}
d^{w_{{\rm arc}}}_{k_G}(D,E)^2& :=\norm{E_{k_G}(\mu_D^{w_{{\rm arc}}}) -E_{k_G}(\mu_E^{w_{{\rm arc}}})}_{\cH_{k_{G}}}^2 \\
& = \sum_{x \in \underline{D}} \sum_{x' \in \underline{D}} w_{{\rm arc}}(x)w_{{\rm arc}}(x')k_{G}(x,x') \\
& ~~ +\sum_{y \in \underline{E}} \sum_{y' \in \underline{E}} w_{{\rm arc}}(y)w_{{\rm arc}}(y')k_{G}(y,y') \\
& ~~ - 2 \sum_{x \in \underline{D}} \sum_{y \in \underline{E}} w_{{\rm arc}}(x)w_{{\rm arc}}(y)k_{G}(x,y).
\end{align*}
%back

Note that we can observe better performance with nonlinear kernels for some complex tasks \cite{MFDS12}.  In this paper, we mainly apply the RKHS Gaussian kernel $K_G$ in the experimental section.
In Section \ref{sec:experiment}, we apply SVM, kernel PCA, and kernel change point detection.

\subsection{Computation of Gram matrix}
\label{subsec:calculation}

Let $\cD=\{D_{\ell} \mid \ell=1,\ldots,n\}$ be a collection of persistence diagrams.
In many practical applications, the number of generators in a persistence diagram can be large, while $n$ is often relatively small:
in Section \ref{subsec:glass}, for example, the number of generators is 30000, while $n=80$.

If the persistence diagrams contain at most $m$ points, each element of the Gram matrix $(K_{G}(D_{i},D_{j}))_{i,j=1,\ldots,n}$ involves $O(m^2)$ evaluation of $e^{-\frac{\norm{x-y}^{2}}{2\sigma^{2}}}$, resulting the complexity $O(m^{2}n^{2})$ for obtaining the Gram matrix.
Hence, reducing computational cost with respect to $m$ is an important issue, since in many applications $n$ is relatively small

We solve this computational issue by using the random Fourier features \cite{RR07}.  To be more precise, let $z_{1},\ldots,z_{M}$ be random variables from the $2$-dimensional normal distribution $N( (0,0), \sigma^{-2} I)$ where $I$ is the identity matrix.  This method approximates $e^{-\frac{\norm{x-y}^{2}}{2\sigma^{2}}}$ by $\frac{1}{M}\sum_{a=1}^{M} e^{\sqrt{-1}z_{a}x} (e^{\sqrt{-1}z_{a}y})^{*}$, where $*$ denotes the complex conjugation.
Then,  $\sum_{x \in \underline{D}_{i}} \sum_{y \in \underline{D}_{j}} w(x)w(y)k_{G}(x,y)$ is approximated by $\frac{1}{M}\sum_{a=1}^{M}B^{a}_{i} (B^{a}_{j})^{*}$, where $B^{a}_{\ell}=\sum_{x \in \underline{D}_{\ell}} w(x) e^{\sqrt{-1}z_{a}x}$.
As a result, the computational complexity of the approximated Gram matrix is $O(mnM+n^2M)$.

We note that approximation by the random Fourier features can be sensitive to the choice of $\sigma$.
If $\sigma$ is much smaller than $\norm{x-y}$, the relative error can be large.  For example,
in the case of $x=(1,2),y=(1,2.1)$ and $\sigma=0.01$, $e^{-\frac{\norm{x-y}^{2}}{2\sigma^{2}}}$ is about $10^{-22}$ while we observed the approximated value can be about $10^{-3}$ with $M=10^{3}$.
As a whole, these $m^{2}$ errors may cause a critical error to the approximation.
Moreover, if $\sigma$ is largely deviated from the ensemble $\norm{x-y}$ for $x \in \underline{D}_{i},y \in \underline{D}_{j}$, then most values $e^{-\frac{\norm{x-y}^{2}}{2 \sigma^{2}}}$ become close to $0$ or $1$.

In order to obtain a good approximation and extract meaningful values, choice of parameter is important.  For supervised learning such as SVM, we use the cross-validation (CV) approach.  For unsupervised case, we follow the heuristics proposed in \cite{GFTSSS07}.
In Section \ref{subsec:glass}, we set $\sigma =\median \{ \sigma(D_{\ell}) \mid \ell=1,\ldots,n\}$, where $\sigma(D)=\median \{ \norm{x_{i}-x_{j}} \mid x_{i},x_{j} \in \underline{D}, \ i<j \}$, so that $\sigma$ takes close values to many $\norm{x-y}$.
For the parameter $C$, we also set $C=( \median \{\pers(D_{\ell}) \mid \ell=1,\ldots,n\} )^{-p}$, where  $\pers(D)=\median \{ \pers(x_{i}) \mid x_{i} \in \underline{D} \}$.
Similarly, $\tau$ is defined by $\median \{ d^{w_{{\rm arc}}}_{k_{G}}(D_{i},D_{j}) \mid 1 \leq i < j \leq n\}$.
In this paper, since all points of data are in $\lR^{3}$, we set $p=5$ from the assumption $p>d+1$ in Theorem \ref{thm:kernel_stability}.

\section{Experiments}
\label{sec:experiment}

We demonstrate the performance of the PWGK using synthesized and real data.  In this section, all persistence diagrams are $1$-dimensional (i.e., rings) and computed by CGAL \cite{DLY15} and PHAT \cite{BKRW14}.

\subsection{Comparison to the persistence scale space kernel}
\label{subsec:pssk}
 
The most relevant work to our method is \cite{RHBK15}.  They propose a positive definite kernel called {\em persistence scale space kernel} (PSSK for short) $K_{PSS}$ on the persistence diagrams:
\begin{align*}
K_{PSS}(D,E)
&=\inn{\Phi_{t}(D)}{\Phi_{t}(E)}_{L^{2}(\lR^2_{ul})} \\
&=\frac{1}{8 \pi t} \sum_{x \in \underline{D}} \sum_{y \in \underline{E}}  e^{-\frac{ \norm{x-y}^{2} }{8 t}} - e^{-\frac{ \norm{x-\bar{y}}^{2} }{8 t}},
\end{align*}
where $\Phi_{t}(D)(x)=\frac{1}{4\pi t} \sum_{y \in \underline{D}} e^{-\frac{\norm{x-y}^{2}}{4 t}} - e^{-\frac{\norm{x-\bar{y}}^{2}}{4 t}}$, and $\bar{y}=(y^{2},y^{1})$ for $y=(y^{1},y^{2})$.
Note that the PSSK also takes zero on the diagonal by subtracting the Gaussian kernels for $y$ and $\bar{y}$.

While both methods discount noisy generators, the PWGK has the following advantages over the PSSK.  (i) The PWGK can control the effect of the persistence by $C$ and $p$ in $w_{{\rm arc}}$ independently of the bandwidth parameter $\sigma$ in the Gaussian factor, while in the PSSK only one parameter $t$ must control the global bandwidth and the discounting effect.  (ii) The approximation by the random Fourier features is not applicable to the PSSK, since it is not shift-invariant in total.  
%\textcolor{cyan}{
%The Nystr\"{o}m method \cite{WS01} or incomplete Cholesky factorization \cite{FS01} can be applied for the approximation of the PSSK, however, the random Fourier features is much faster and typically shows better performance than both methods. The above observations will be experimentally verified in Sections \ref{subsec:Synthesized} and \ref{subsec:glass}. 
%}
%If we try to apply the Fourier expansion to the two terms $e^{-\frac{ \norm{x-y}^{2} }{8 t}}$ and $e^{-\frac{ \norm{x-\bar{y}}^{2} }{8 t}}$ separately, the choice of the bandwidth parameter $t$ would be difficult for a good approximation of $K_{PSS}$, as discussed in Section \ref{subsec:calculation}. 
We also note that, in \cite{RHBK15}, only the linear kernel is considered on the RKHS, while our approach involves a nonlinear kernel on the RKHS. 

Regarding the approximation of the PSSK, Nystr\"{o}m method \cite{WS01} or incomplete Cholesky factorization \cite{FS01} can be applied.  In evaluating the kernels, we need to calculate $(k(x,y))_{x\in \underline{D}_i, y\in \underline{D}_j}$, which is not symmetric.  We then need to apply Nystr\"{o}m or incomplete Cholesky to the symmetric but big positive definite matrix $(k(x,y))_{\substack{x\in \underline{D}_i,y\in\underline{D}_j\\ i,j=1,\dots,n}}$ of size $O(nm)$.
Either, we need to apply incomplete Cholesky to the non-symmtric matrix $(k(x,y))_{x\in \underline{D}_i,y\in \underline{D}_j}$ for all the combination of $(i,j)$, which requires considerable computational cost for large $n$.  In contrast, the random Fourier features can be applied to the kernel {\em function} irrespective to evaluation points, and the same Fourier expansion can be applied to any $(i,j)$. This guarantees the efficient computational cost. 
%In the experiments of Section \ref{subsec:glass}, we use random Fourier features for the PWGK and Nystr\"{o}m method for the PSSK, and compare the computational cost and results of change point detection. 
%GK: Due to slimitation of space, can it be moved to supplementary?} \textcolor{cyan}

The detailed comparisons will be experimentally verified in Sections \ref{subsec:Synthesized} and \ref{subsec:glass}.  
With respect to the parameter $t$ in the PSSK, since $e^{-\frac{ \norm{x-\bar{y}}^{2} }{8 t}}$ is only used to vanish the value of the feature map $\Phi_{t}(D)$ on the diagonal,  we set $t=\frac{\sigma^{2}}{4}$ by using the same $\sigma$ defined in Section \ref{subsec:calculation}.

\subsection{Classification with synthesized data}
\label{subsec:Synthesized}

We first use the proposed method for a classification task with SVM, and compare the performance with the PSSK.  The synthesized data are generated as follows.  Each data set assumes one or two circles, and data points are located at even spaces along the circle(s).  It always contains one larger circle $S_1$ of radius $r_1^o$ raging from 1 to 10, and it may have a smaller circle $S_2$ of radius 0.2 (10 points) with probability 1/2.  Roughly speaking, the class label $Y$ is made by ${\bf XOR}(z_0, z_1)$, where $z_i$ ($i=0,1)$ is a binary variable: $z_0 =1$ if the smaller $S_2$ exists, and $z_1=1$ if the birth and death $(b^o,d^o)$ of the generator corresponding to $S_1$ satisfies $b^o<A_B$ and $d^o>A_D$ for fixed thresholds $A_B,A_D$.  We can control $b^o$ and $d^o$ by choosing the number of points $N_1^o$ along $S_1$ (for birth) and the radius $r_1^o$ (for death).  To generate the data points, we add noise effects to make the classification harder: the radius $r_1$ and sample size $N_1$ are in fact given by adding noise to $r_1^0$ and $N_1^o$, and $S_1$ are made according to the shifted $r_1$ and $N_1$, while the class label is given by the non-shifted $r_1^o$ and $N_1^o$.  For the precise description of the data generation procedure, see Supplementary material.  By this construction, the classifier needs to look at both of the location of the generator and the existence of the generator for the smaller one around the diagonal.

SVMs are trained with persistence diagrams given by 100 data sets, and evaluated with 99 independent test data sets.  For the kernel on RKHS, we used both of the linear and Gaussian kernels.  The hyper-parameters $(\sigma, C)$ in the PWGK and $t$ in the PSSK are chosen by the 10-fold cross-validation, and the degree $p$ in the weight of the PWGK is set to be $5$.  The variance parameter in the RKHS-Gaussian kernel is set by the median heuristics.  We also apply the Gaussian kernel (without any weights) for embedding persistence diagrams to RKHS.   
\begin{table}
  \caption{Results of SVM with PWGK, PSSK, and Gaussian. Average classification rates ($\%$) for 99 test data sets are shown.}\label{table:Synth_results}
  \centering
\begin{tabular}{c|c c}
  \hline
  % after \\: \hline or \cline{col1-col2} \cline{col3-col4} ...
   & RKHS-Linear  & RKHS-Gauss  \\ \hline
  PWGK &  60.0  & 83.0  \\
  PSSK &  49.5  & 54.5 \\
  Gauss & 57.6  & 69.7    \\
  \hline
\end{tabular}

\end{table}
In Table \ref{table:Synth_results}, we can see that the PSSK does not work well for this problem, even worse than the Gaussian kernel, and the classification rate by the linear RKHS kernel used originally in \cite{RHBK15} is almost the chance level.  This must be caused by the difficulty in handling the global location of generators and close look around the diagonal simultaneously.  This classification task involves strong nonlinearity on the RKHS, as seen in the large improvement by PWGK+Gauss kernel.

\subsection{Analysis of ${\rm {\bf SiO_2}}$}
\label{subsec:glass}

In this experiment, we compare the PWGK and the PSSK to the non-trivial problem of glass transition on ${\rm SiO}_2$, focusing also on their computational efficiency.

When we rapidly cool down the liquid state of ${\rm SiO}_2$, it avoids the usual crystallization and changes into a glass state. 
Understanding the liquid-glass transition is an important issue for the current physics and industrial applications \cite{GS07}. For estimating the glass transition temperature by simulations, we first prepare atomic configurations of ${\rm SiO_{2}}$ for a certain range of temperatures, and then draw the temperature-enthalpy graph. 
The graph consists of two lines in high and low temperatures with slightly different slopes which correspond to the liquid and the glass states, respectively, and the glass transition temperature is conventionally estimated as an interval of the transient region combining these two lines
 (e.g., see \cite{El90}). However, since the slopes of the two lines are close to each other, determining the interval is a subtle problem, and usually the rough estimate of the interval is only available. Hence, it is desired to develop a mathematical framework to detect the glass transition temperature. 

Our strategy is to regard the glass transition temperature as the change point and detect it from a collection $\cD=\{D_{\ell} \mid \ell=1,\ldots,n\}$ of persistence diagrams made by atomic configurations of ${\rm SiO}_2$, where $\ell$ is the index of the temperatures listed in the decreasing order. We use the kernel Fisher discriminant ratio $\KFDR_{n,\ell,\gamma}(\cD)$ \cite{HMB09}
as a statistical quantity for the change point detection. Here, we set $\gamma=10^{-3}$ in this paper, and the index $\ell$ achieving the maximum of $\KFDR_{n,\ell,\gamma}(\cD)$ corresponds to the estimated change point.
The $\KFDR$ is calculated by the Gram matrix $(K(D_{i},D_{j}))_{i,j=1,\ldots,n}$ with respect to the kernel $K$.

We compute $\cD$ with $n=80$ from the data used in \cite{NHHEMN15,NHHEN15}. 
Since the persistence diagrams of ${\rm SiO_{2}}$ contain huge amount of points, we apply the random Fourier features and the Nystr{\"o}m methods \cite{DM05} for the approximations of the PWGK with the Gaussian RKHS and the PSSK, respectively. The sample sizes used in both approximations are denoted by $M$  and $c$, where $c$ is the number of chosen columns. 
Figrue \ref{fig:cpa} summarizes the plots of the change points for several sample sizes and the computational time. 
\begin{figure}
\begin{minipage}{0.48\hsize}
 \begin{center}
  \includegraphics[width=40mm]{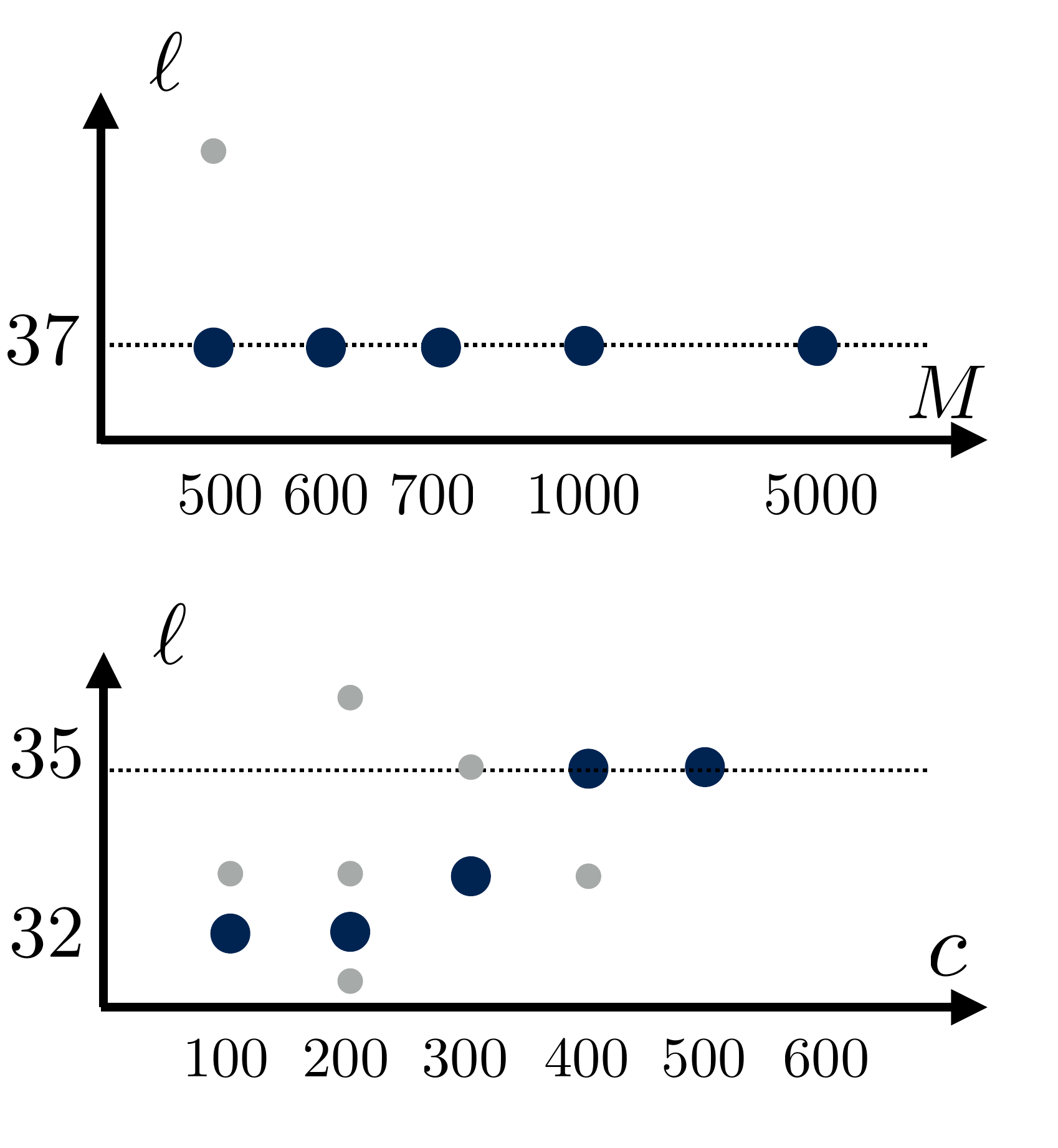}
 \end{center}
 \end{minipage}
 \begin{minipage}{0.48\hsize}
  \begin{center}
  \begin{tabular}{c|c||c|c}
  \multicolumn{4}{c}{Computational time} \\ \hline
  \multicolumn{2}{c||}{PWGK} & \multicolumn{2}{c}{PSSK} \\ \hline
  M &  sec & c & sec  \\ \hline
  500 &  40  & 100 & 130 \\
   700 &  60  &  200 & 350 \\
  1000 &  90  & 300 & 700 \\
  3000 &   220 & 400 & 1200 \\
  5000 & 410  & 500 & 1530    \\
  \hline
\end{tabular}
\end{center}
 \end{minipage}
%  \caption{(Left) Estimated change points by the approximated PSSK (top) and the PWGK (bottom). The parameters $c$ and $M$ are the sample numbers used in both approximations. The black dots and the gray dots mean the majority and the minority of estimated change points, respectively. For example, we computed 5 times $\KFDR$ with $c=400$ and obtained 4 times $\ell=35$ and 1 time $\ell=33$.  (Right) Computational times. }
  \caption{(Left) Estimated change points by the approximated PWGK (top) and the PSSK (bottom). The parameters $M$ and $c$ are the sample numbers used in both approximations. At each parameter, both methods are tested multiple times and the black dots and the gray dots mean the majority and the minority of estimated change points, respectively. (Right) Computational time. }
  \label{fig:cpa}
\end{figure}
The interval of the glass transition temperature $T$ estimated by the conventional method  explained above is $2000K\leq T\leq 3500K$, which corresponds to $35\leq \ell \leq 50$.

%From Figure \ref{fig:kfdr_pca}, the grass transition is expected to occur around $\ell=35\sim 37$. 

The computational complexity of the random Fourier features with respect to the sample size is $O(M)$, while that of the Nystr{\"o}m method involves matrix inversion of $O(c^3)$. For this reason, the PSSK with $c>500$ cannot be performed in reasonable time, and hence we cannot check the convergence of the change points with respect to the sample size as shown in Figure \ref{fig:cpa}. On the other hand, the PWGK plot shows the convergence to $\ell=37$, implying that $\ell=37$ is the true change point.  We here emphasize that the computation to obtain $\ell=37$ by the PWGK is much faster than the PSSK. 

\begin{figure}[htbp]
  \begin{center}
  \includegraphics[width=80mm]{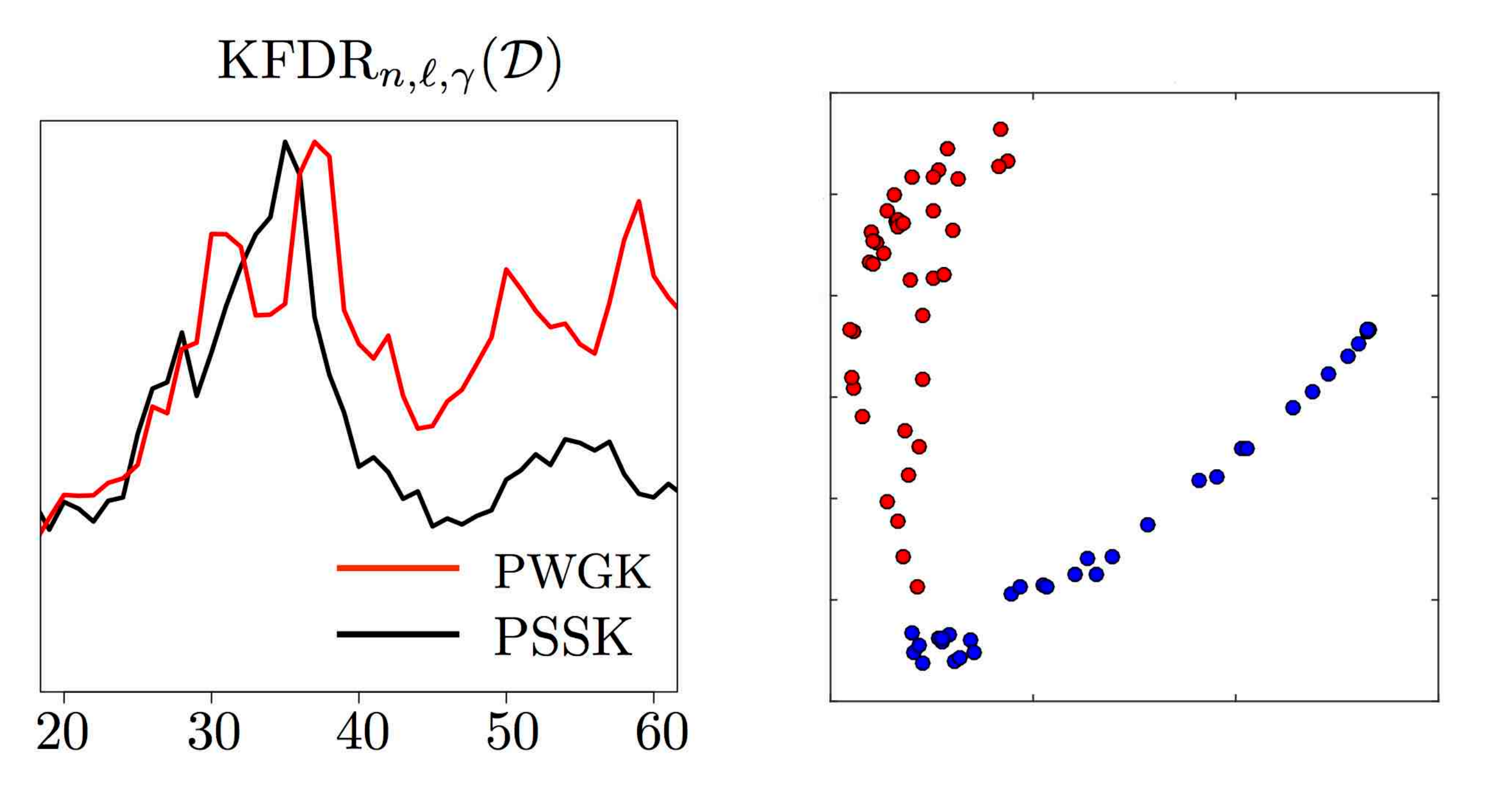}
   \caption{(Left) The $\KFDR$ plots of the PWGK $(M=1000)$ and the PSSK $(c=500)$. (Right) The $2$-dimensional KPCA plot of the PWGK.
   } 
  \label{fig:kfdr_pca}
 \end{center}
\end{figure}
Figure \ref{fig:kfdr_pca} shows the normalized plots of $\KFDR_{n,\ell,\gamma}(\cD)$ and the $2$-dimensional plot given by KPCA (the color is given by the result of the change point detection by the PWGK).  
As we see from the figure, the KPCA plot shows the clear phase change between before (red) and after (blue) the change point. This strongly suggests that the glass transition occurs at the detected change point.

\subsection{Protein classification}
\label{subsec:Protein}

%KF: May include more details.}

We apply the PWGK to two classification tasks studied in \cite{CMWOXW15}.  They use the molecular topological fingerprint (MTF) as a feature vector for the input to the SVM.  The MTF is given by the 13 dimensional vector whose elements consist of the persistences of some specific generators (e.g., the longest, second longest, etc.) in persistence diagrams. 
We compare the performance of the PWGK with the Gaussian RKHS kernel and the MTF method under the same setting of the SVM reported in \cite{CMWOXW15}.

The first task is a protein-drug binding problem, and we classify the binding and non-binding of drug to the M2 channel protein of the influenza A virus.  For each form, 15 data were obtained by NMR experiments, in which 10 data are used for training and the remaining for testing. We randomly generated 100 ways of partitions, and calculated the classification rates.

%In the second problem, the taut and relaxed forms of hemoglobin are classified.  The data were collected from the X-ray crystallography, and there are 9 data for each form.  Following \cite{CMWOXW15}, we select one data from each class for testing, and use the remaining 16 for training; we used all the 81 combinations for calculating the average classification rates.
In the second problem, the taut and relaxed forms of hemoglobin are to be classified.  For each form, 9 data were collected by the X-ray crystallography.  We select one data from each class for testing, and use the remaining  for training. All the 81 combinations are performed to calculate the CV classification rates.

The results of the two problems are shown in Table \ref{table:Protein_results}. We can see that the PWGK  achieves better performance than the MTF in both problems.
\begin{table}[ttt]
  \caption{CV classification rates ($\%$) of SVM with PWGK and MTF (cited from \cite{CMWOXW15}).} \label{table:Protein_results}
  \centering
\begin{tabular}{c|c c}
  \hline
  % after \\: \hline or \cline{col1-col2} \cline{col3-col4} ...
   & Protein-Drug  &  Hemoglobin \\ \hline
  PWGK  &  100  & 88.90  \\
  MTF-SVM  &  (nbd) 93.91 / (bd) 98.31 & 84.50 \\
  \hline
\end{tabular}

\end{table}

\section{Conclusion}
In this paper, we have proposed a kernel framework for analysis with persistence diagrams, and the persistence weighted Gaussian kernel as a useful kernel for the framework.
As a significant advantage, our kernel enables one to control the effect of persistence in data analysis.
We have also proven the stability result with respect to the kernel distance.
Furthermore, we have analyzed the synthesized and real data by using the proposed kernel.
The change point detection, the principal component analysis, and the support vector machine using the PWGK derived meaningful results in physics and biochemistry. From the viewpoint of computations, our kernel provides an accurate and efficient approximation to compute the Gram matrix, suitable for practical applications of TDA.

% Acknowledgements should only appear in the accepted version. 
%\section*{Acknowledgements} 
%
%\textbf{Do not} include acknowledgements in the initial version of
%the paper submitted for blind review.
%
%If a paper is accepted, the final camera-ready version can (and
%probably should) include acknowledgements. In this case, please
%place such acknowledgements in an unnumbered section at the
%end of the paper. Typically, this will include thanks to reviewers
%who gave useful comments, to colleagues who contributed to the ideas, 
%and to funding agencies and corporate sponsors that provided financial 
%support.  

% In the unusual situation where you want a paper to appear in the
% references without citing it in the main text, use \nocite
%\nocite{langley00}

\newpage

\section*{Supplementary Material}

This supplementary material provides a brief introduction of topological tools used in the paper, the proof of Theorem 3.1, the proof of Proposition 3.1, and explanations of synthesized data used in Section 4.2.
In oder to prove Theorem 3.1, we introduce sub-level sets in Section \ref{sec:sublevel} and total persistence in Section \ref{sec:total}, and we will prove a generalization of Theorem 3.1 in Section \ref{sec:stability}.

\appendix

\section{Topological tools}
This section summarizes some topological tools used in the paper. In general, topology concerns geometric properties invariant to continuous deformations. To study topological properties algebraically, simplicial complexes are often considered as basic objects. We start with a brief explanation of simplicial complexes, and gradually increase the generality from simplicial homology to singular and persistent homology. For more details, see \cite{Ha01}.

\subsection{Simplicial complex}\label{sec:sc}
We first introduce a combinatorial geometric model called simplicial complex to define homology. Let $P=\{1,\dots,n\}$ be a finite set (not necessarily points in a metric space). A {\em simplicial complex} with the vertex set $P$ is defined by a collection $S$ of subsets in $P$ satisfying the following properties:
\begin{enumerate}
\item $\{i\}\in S$ for $i=1,\dots,n$, and
\item if $\sigma\in S$ and $\tau\subset \sigma$, then $\tau\in S$.
\end{enumerate}

Each subset $\sigma$ with $q+1$ vertices is called a $q$-simplex. We denote the set of $q$-simplices by $S_q$. A subcollection $T\subset S$ which also becomes a simplicial complex (with possibly less vertices) is called a subcomplex of $S$.

We can visually deal with a simplicial complex $S$ as a polyhedron by pasting simplices in $S$ into a Euclidean space. The simplicial complex obtained in this way is called a geometric realization, and its polyhedron is denoted by $|S|$. 
In this context, the simplices with small $q$ correspond to points ($q=0$), edges ($q=1$), triangles ($q=2$), and tetrahedra ($q=3$). 
\begin{exam}\label{exam:sc}
{\rm 
Figure \ref{fig:sc} shows two polyhedra of simplicial complexes
\begin{align*}
&S=\{
\{1\},
\{2\},
\{3\},
\{1,2\},
\{1,3\},
\{2,3\},
\{1,2,3\}\},\\
&T=\{
\{1\},
\{2\},
\{3\},
\{1,2\},
\{1,3\},
\{2,3\}\}.
\end{align*}

\begin{figure}[htbp]
 \begin{center}
  \includegraphics[width=50mm]{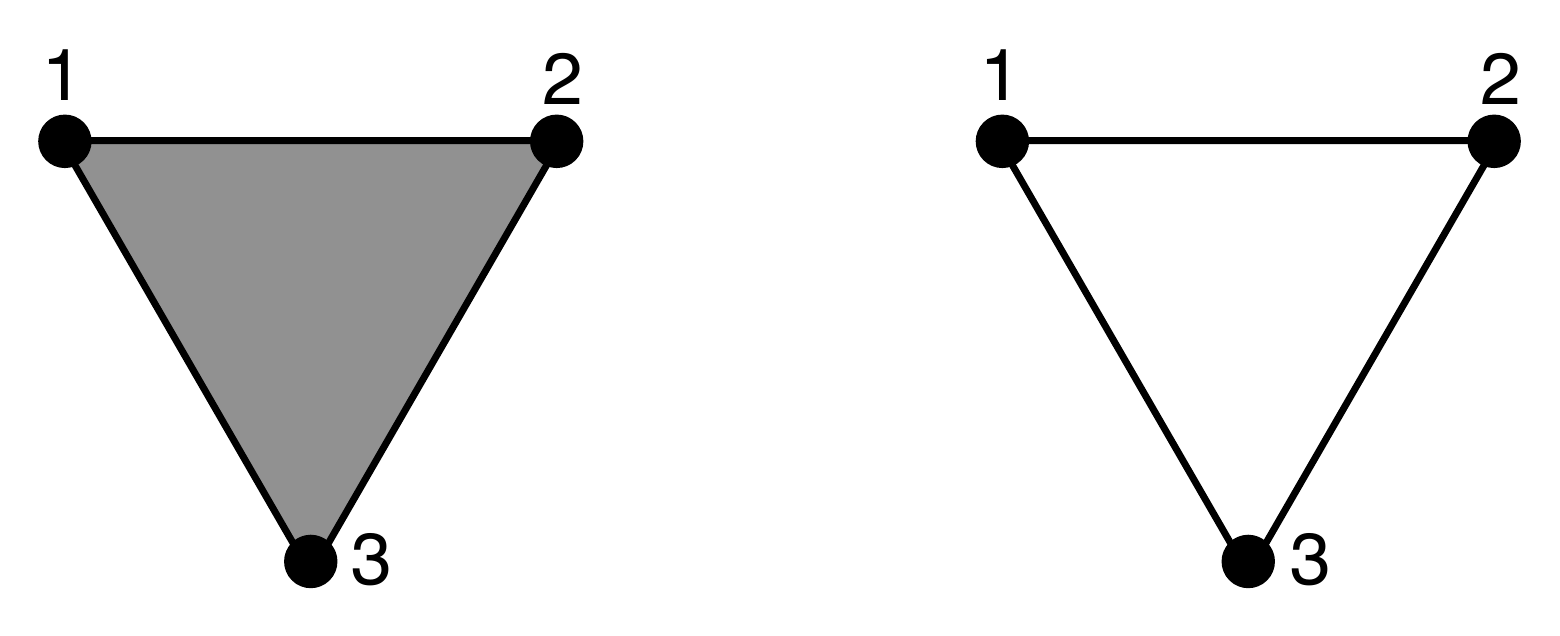}
 \end{center}
  \caption{The polyhedra of the simplicial complexes $S$ (left) and $T$ (right).}
  \label{fig:sc}
\end{figure}
}\end{exam}

\subsection{Homology}\label{sec:homology}
\subsubsection{Simplicial homology}\label{sec:simplicial_homology}
The procedure to define homology is summarized as follows:
\begin{enumerate}
\item Given a simplicial complex $S$, build a chain complex  $C_*(S)$. This is an algebraization of $S$ characterizing the boundary. 
\item Define homology by quotienting out  certain subspaces in $C_*(S)$ characterized by the boundary. 
\end{enumerate}

We begin with the procedure 1 by assigning orderings on simplices. 
When we deal with a $q$-simplex $\sigma=\{i_0,\dots,i_q\}$ as an ordered set, there are $(q+1)!$ orderings on $\sigma$. For $q>0$, we define an equivalence relation $i_{j_0},\dots,i_{j_q}  \sim  i_{\ell_0},\dots,i_{\ell_q} $ on two orderings of $\sigma$ such that they are mapped to each other by even permutations. 
By definition, two equivalence classes exist, and each of them is called an oriented simplex. 
An oriented simplex is denoted by $\langle i_{j_0},\dots,i_{j_q} \rangle$, and its opposite orientation is expressed by adding the minus $-\langle i_{j_0},\dots,i_{j_q} \rangle$.
We write $\langle\sigma\rangle=\langle i_{j_0},\dots,i_{j_q} \rangle$ for the equivalence class including $i_{j_0}<\dots<i_{j_q}$. For $q=0$, we suppose that we have only one orientation for each vertex. 

Let $K$ be a field. We construct a $K$-vector space $C_q(S)$ as 
\begin{align*}
C_q(S)={\rm Span}_K\{\langle \sigma\rangle\mid \sigma \in S_q\}
\end{align*}
for $S_q\neq\emptyset$ and $C_q(S)=0$ for $S_q=\emptyset$.
Here, ${\rm Span}_K(A)$ for a set $A$ is a vector space over $K$ such that the elements of $A$ formally form a basis of the vector space.
Furthermore, we define a linear map called the {\em boundary map} $\partial_q: C_q(S)\rightarrow C_{q-1}(S)$ by 
the linear extension of 
\begin{align}\label{eq:boundary}
\partial_q\langle i_0,\dots,i_q\rangle=\sum_{\ell=0}^q(-1)^\ell\langle i_0,\dots,\widehat{i_\ell},\dots,i_q\rangle,
\end{align}
where $\widehat{i_\ell}$ means the removal of the vertex $i_\ell$. We can regard the linear map $\partial_q$ as algebraically capturing the $(q-1)$-dimensional boundary of a $q$-dimensional object. 

For example, the image of the $2$-simplex $\langle\sigma\rangle=\langle 1,2,3\rangle$ is given by 
$\partial_2\langle\sigma\rangle=\langle2,3\rangle-\langle1,3\rangle+\langle1,2\rangle$, which is the boundary of $\sigma$ (see Figure \ref{fig:sc}). 

In practice, by arranging some orderings of the oriented $q$- and $(q-1)$- simplices, we can represent the boundary map as a matrix $M_q=(M_{\sigma,\tau})_{\sigma\in S_{q-1},\tau\in S_q}$ with the entry $M_{\sigma,\tau}=0,\pm 1$ given by the coefficient in \eqref{eq:boundary}. For the simplicial complex $S$ in Example \ref{exam:sc}, the matrix representations $M_1$ and $M_2$ of the boundary maps are  given by 
\begin{align}\label{eq:matrix}
M_2=\left[
\begin{array}{r}
1\\
1\\
-1
\end{array}\right],\quad
M_1=\left[
\begin{array}{rrr}
-1& 0&-1\\
 1&-1&0\\
 0& 1&1
\end{array}
\right]
\end{align}
Here the $1$-simplices (resp. $0$-simplices) are ordered by $\langle 1,2\rangle, \langle2,3\rangle, \langle 1,3\rangle$ (resp. $\langle1\rangle$, $\langle2\rangle$, $\langle3\rangle$).

We call a sequence of the vector spaces and linear maps
\begin{align*}
\xymatrix{
\cdots\ar[r] & C_{q+1}(S)\ar[r]^{\partial_{q+1}} & C_q(S)\ar[r]^{\partial_q} & C_{q-1}(S)\ar[r] & \cdots
}
\end{align*}
the {\em chain complex} $C_*(S)$ of $S$. As an easy exercise, we can show $\partial_{q}\circ \partial_{q+1}=0$. Hence, the subspaces $Z_q(S)={\rm ker}\partial_q$ and $B_q(S)={\rm im}\partial_{q+1}$ satisfy $B_q(S)\subset Z_q(S)$. Then, the {\em $q$-th (simplicial) homology} is defined by taking the quotient space
\begin{align*}
H_q(S)=Z_q(S)/B_q(S).
\end{align*}
Intuitively, the dimension of $H_q(S)$ counts the number of $q$-dimensional holes in $S$ and each generator of the vector space $H_q(S)$ corresponds to these holes. We remark that the homology as a vector space is independent of the orientations of simplices. 

For a subcomplex $T$ of $S$, the inclusion map $\rho:T\hookrightarrow S$ naturally induces a linear map in homology $\rho_q: H_q(T)\rightarrow H_q(S)$. Namely, an element $[c]\in H_q(T)$ is mapped to $[c]\in H_q(S)$, where the equivalence class $[c]$ is taken in each vector space. 

For example, the simplicial complex $S$ in Example \ref{exam:sc} has 
\[
Z_1(S)={\rm Span}_K[\begin{array}{ccc}1 & 1 & -1\end{array}]^T=B_1(S)
\]
 from (\ref{eq:matrix}). Hence $H_1(S)=0$, meaning that there are no $1$-dimensional hole (ring) in $S$.
On the other hand, since $Z_1(T)=Z_1(S)$ and $B_1(T)=0$, we have $H_1(T)\simeq K$, meaning that $T$ consists of one ring. Hence, the induced linear map $\rho_1: H_1(T)\rightarrow H_1(S)$ means that the ring in $T$ disappears in $S$ under $T\hookrightarrow S$.

A topological space $X$ is called {\em triangulable} if there exists a geometric realization of a simplicial complex $S$ whose polyhedron is homeomorphic\footnote{A continuous map $f:X \ra Y$ is said to be {\em homeomorphic} if $f:X \ra Y$ is bijective and the inverse $f^{-1}:Y \ra X$ is also continuous.} to $X$. For such a triangulable topological space, the homology is defined by $H_q(X)=H_q(S)$. This is well-defined, since a different geometric realization provides an isomorphic homology. 

\subsubsection{Singular homology}\label{sec:singular_homology}
We here extend the homology to general topological spaces. Let $e_0,\dots,e_q$ be the standard basis of ${\mathbb R}^{q+1}$ (i.e., $e_i=(0,\dots,0,1,0,\dots,0)$, 1 at  $(i+1)$-th position, and 0 otherwise), and set
\begin{align*}
&\Delta_q=\rl{
\sum_{i=0}^q\lambda_ie_i \lmid \sum_{i=0}^q\lambda_i=1, \lambda_i\geq 0
},\\
&\Delta^\ell_q=\rl{
\sum_{i=0}^q\lambda_ie_i \lmid \sum_{i=0}^q\lambda_i=1, \lambda_i\geq 0, \lambda_\ell=0
}.
\end{align*}
We also denote the inclusion by $\iota^\ell_q: \Delta^\ell_q\hookrightarrow \Delta_q$. 

For a topological space $X$, a continuous map $\sigma: \Delta_q\rightarrow X$ is called a singular $q$-simplex, and let $X_q$ be the set of $q$-simplices. 
We construct a $K$-vector space $C_q(X)$ as 
\begin{align*}
C_q(X)={\rm Span}_K\{\sigma\mid\sigma\in X_q\}.
\end{align*}
The boundary map $\partial_q:C_q(X)\rightarrow C_{q-1}(X)$ is defined by the linear extension of
\begin{align*}
\partial_q \sigma=\sum_{\ell=0}^q(-1)^\ell\sigma\circ \iota^\ell_q.
\end{align*}

Even in this setting, we can show that $\partial_{q}\circ\partial_{q+1}=0$, and hence the subspaces $Z_q(X)={\rm ker}\partial_q$ and $B_q(X)={\rm im}\partial_{q+1}$ satisfy $B_q(X)\subset Z_q(X)$. Then, the {\it $q$-th (singular) homology} is similarly defined by 
\begin{align*}
H_q(X)=Z_q(X)/B_q(X).
\end{align*}
It is known that, for a triangulable topological space, the homology of this definition is isomorphic to that defined in 
\ref{sec:simplicial_homology}. From this reason, we hereafter identify simplicial and singular homology. 

The induced linear map in homology for an inclusion pair of topological space $Y\subset X$ is similarly defined as in \ref{sec:simplicial_homology}. 

\subsection{Persistent homology}\label{sec:homology}
Let $\mathbb{X}=\{X_a\mid a\in\mathbb{R}\}$ be a (right continuous) filtration of topological spaces, i.e., $X_a\subset X_b$ for $a\leq b$ and $X_{a}=\bigcap_{a<b}X_{b}$. For $a\leq b$, we denote the linear map induced from $X_a\hookrightarrow X_b$ by $\rho^b_a: H_q(X_a)\rightarrow H_q(X_b)$. The {\em persistent homology} $H_q(\mathbb{X})=(H_q(X_a),\rho^b_a)$ of $\mathbb{X}$ is defined by the family of homology $\{H_q(X_a)\mid a\in\mathbb{R}\}$ and the induced linear maps $\rho^b_a$ for all $a\leq b$. 

A {\em homological critical value} of $H_q(\mathbb{X})$ is the number $a\in\mathbb{R}$ such that the linear map $\rho^{a+\ee}_{a-\ee}: H_q(X_{a-\ee})\rightarrow H_q(X_{a+\ee})$ is not isomorphic for any $\ee>0$. The persistent homology $H_q(\mathbb{X})$ is called {\em tame}, if $\dim H_q(X_a)<\infty$ for any $a\in\mathbb{R}$ and the number of homological critical values is finite. 
A tame persistent homology $H_q(\mathbb{X})$ has a nice decomposition property:
\begin{thm}[\cite{ZC05}]\label{thm:decomposition}
A tame persistent homology can be uniquely expressed by
\begin{align}\label{eq:decom}
H_q(\mathbb{X})\simeq\bigoplus_{i\in I} I[b_i,d_i],
\end{align}
where $I[b_i,d_i]=(U_a,\iota^b_a)$ consists of a family of vector spaces
\begin{align*}
U_a=\left\{\begin{array}{ll}
K,&b_i\leq a < d_i\\
0,&{\rm otherwise}
\end{array}\right.,
\end{align*}
and the identity map $\iota^b_a={\rm id}_{K}$ for $b_i\leq a \leq b<d_i$.
\end{thm}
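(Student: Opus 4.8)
The plan is to reduce the asserted decomposition to the classification of finitely generated graded modules over the polynomial ring $K[t]$, which is a principal ideal domain. The argument proceeds in four stages: discretization using tameness, algebraization into a graded module, application of the structure theorem over a PID, and translation back into interval modules together with the uniqueness claim.

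First I would use tameness to pass from the continuous index set $\mathbb{R}$ to a finite one. By hypothesis $\dim H_q(X_a)<\infty$ for every $a$, and there are only finitely many homological critical values $a_1<\cdots<a_m$. Away from these values the induced maps $\rho^b_a$ are isomorphisms, so the persistent homology is completely determined by the finite sequence $H_q(X_{c_0})\to H_q(X_{c_1})\to\cdots\to H_q(X_{c_m})$ obtained by choosing one representative $c_j$ in each maximal interval of constancy, with transition maps induced by the $\rho^b_a$. This reduces the problem to decomposing a finite persistence module, equivalently a representation of a linear type-$A_{m+1}$ quiver.

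Next I would algebraize this finite module following the correspondence of \cite{ZC05}: to the sequence $V_0\to V_1\to\cdots\to V_m$ one associates the graded module $\bigoplus_j V_j$ over $K[t]$, where the indeterminate $t$ raises the grading by one and acts on $V_j$ by the transition map $V_j\to V_{j+1}$. Since each $V_j$ is finite-dimensional and there are finitely many of them, this module is finitely generated, and because $K[t]$ is a graded PID the structure theorem yields a decomposition into graded cyclic summands of the form $\Sigma^{s}K[t]$ (a free part, giving an interval $[b_i,\infty)$) or $\Sigma^{s}K[t]/(t^{\ell})$ (a torsion part, giving a finite interval $[b_i,d_i)$). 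The gradings record the shift $s$ and the length $\ell$, and the \emph{uniqueness} clause of the PID structure theorem pins down the resulting multiset of summands uniquely.

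Finally I would translate the algebraic decomposition back to the statement. Each cyclic summand is precisely an interval module $I[b_i,d_i]$: it equals $K$ exactly on the grades where its generator survives, with identity transition maps there and zero elsewhere, so re-indexing the integer grades by the real critical values recovers the birth--death pairs $(b_i,d_i)$ in $\mathbb{R}$. The main obstacle, requiring the most care, is the discretization step: one must verify that restricting to the representatives $c_j$ loses no information, i.e. that right continuity $X_a=\bigcap_{a<b}X_b$ together with finiteness of the critical values forces the maps to stabilize between consecutive values, so that the finitely generated graded module faithfully encodes $H_q(\mathbb{X})$ over all of $\mathbb{R}$. Matching the half-open interval convention $b_i\leq a<d_i$ to this right-continuity is exactly what lets the uniqueness over the continuum descend from uniqueness in the finite case.
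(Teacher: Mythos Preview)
The paper does not actually prove this theorem: it is stated with the attribution \cite{ZC05} and no proof is supplied in the text. Your proposal is a faithful outline of precisely the Zomorodian--Carlsson argument being cited --- discretize via tameness, encode the resulting finite persistence module as a finitely generated graded $K[t]$-module, apply the graded structure theorem over the PID $K[t]$, and read off interval summands with uniqueness --- so there is nothing to compare against and your sketch is consistent with the source the paper defers to.
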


Each summand $I[b_i,d_i]$ is called a generator of the persistent homology and $(b_i,d_i)$ is called its birth-death pair. We note that, when $\dim H_q(X_a) \neq 0$ for any $a<0$ (or for any $a>0$ resp.), the decomposition (\ref{eq:decom}) should be understood in the sense that some $b_i$ takes the value $-\infty$ (or $d_i=\infty$, resp.), where $-\infty,\infty$ are the elements in the extended real $\overline{\mathbb{R}}=\mathbb{R}\cup\{-\infty,\infty\}$. 
%We note that, when $\dim H_q(X_a) \neq 0$ for any $a<0$ (or for any $a>0$ resp.), the birth $b_i$ takes the value $-\infty$ (or $d_i=\infty$, resp.), where $-\infty,\infty$ are the elements in the extended real $\overline{\mathbb{R}}=\mathbb{R}\cup\{-\infty,\infty\}$. 

From the decomposition in Theorem \ref{thm:decomposition}, we define a multiset
\begin{align*}
\underline{D}_q(\mathbb{X})=\rl{ (b_i,d_i)\in\overline{\mathbb{R}}^2 \lmid i\in I }.
\end{align*}
The {\em persistence diagram} $D_q(\mathbb{X})$ is defined by the disjoint union of $\underline{D}_q(\mathbb{X})$ and the diagonal set $\Delta=\{(a,a)\mid a\in\overline{\mathbb{R}}\}$ with infinite multiplicity.

By definition, a generator $(b,d)\in D_q(\mathbb{X})$ close to $\Delta$ possesses a short lifetime, implying a noisy topological feature under parameter changes. On the other hand, a generator far away from $\Delta$ can be regarded as a robust feature. 

As an example, we show a filtration of simplicial complexes $\mathbb{X}$ in Figure \ref{fig:filtration_suppli} and its persistence diagram $D_1(\mathbb{X})$ in Figure \ref{fig:pd}.

\begin{figure}[htbp]
 \begin{center}
  \includegraphics[width=100mm]{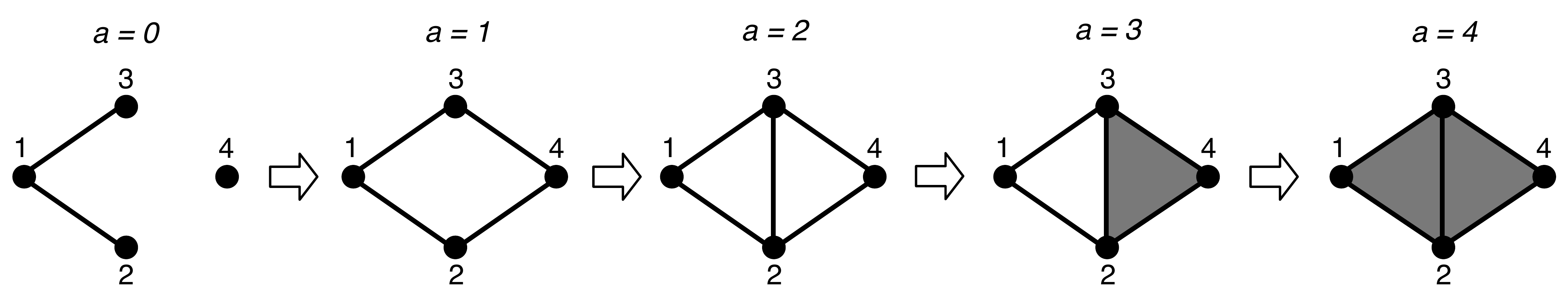}
 \end{center}
  \caption{A filtration $\mathbb{X}$ of simplicial complexes.}
  \label{fig:filtration_suppli}
\end{figure}

\begin{figure}[htbp]
 \begin{center}
  \includegraphics[width=40mm]{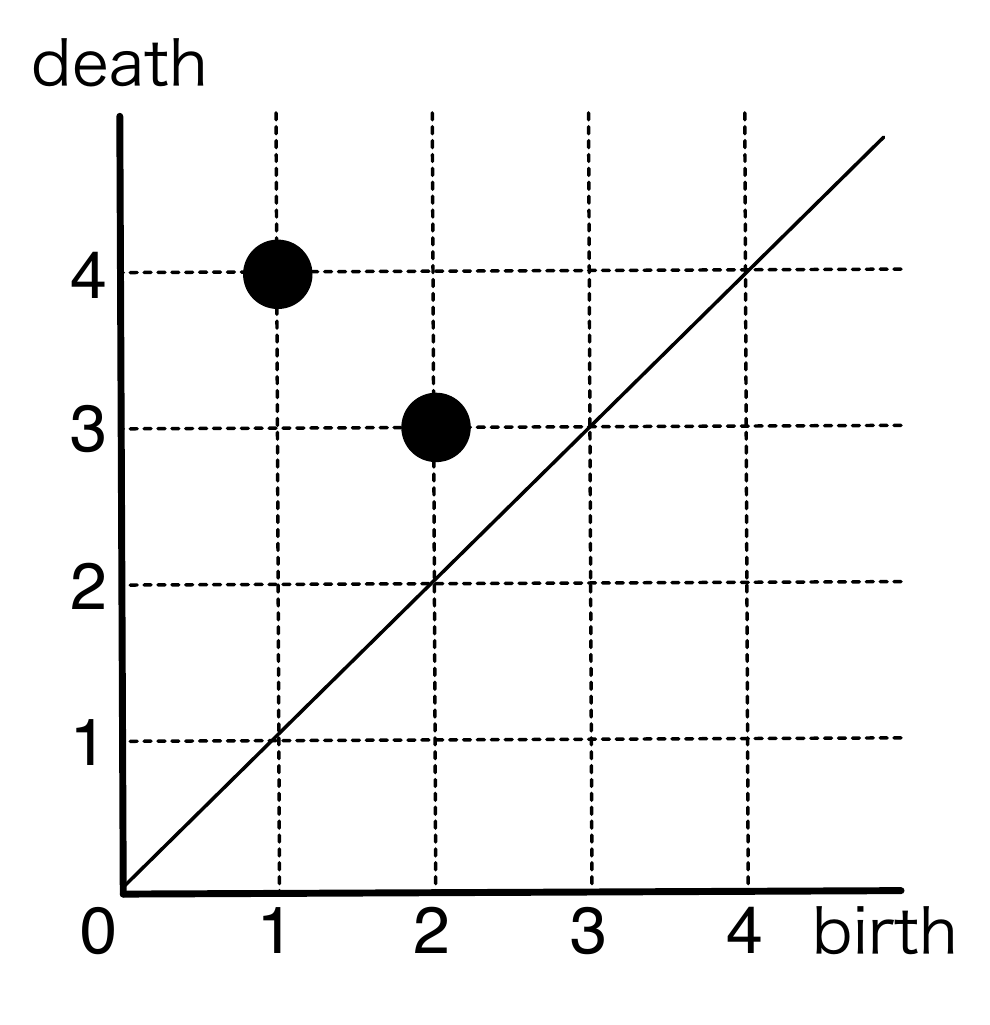}
 \end{center}
  \caption{Persistence diagram $D_1(\mathbb{X})$.}
  \label{fig:pd}
\end{figure}

In the paper, we considered a filtration $\mathbb{X}=\{X_r\mid r\in\mathbb{R}\}$ generated by a finite subset $X=\{\bm{x}_1, \ldots, \bm{x}_n\}$ in some metric space $(M,d_M)$ by $X_r=\bigcup_{i=1}^nB(\bm{x}_i;r)$, where $B(\bm{x};r)=\{\bm{x}'\in M\mid d_M(\bm{x},\bm{x}')\leq r\}$. For notational simplicity, we denote the persistence diagram of this filtration model by $D_q(X)$.
\section{Sub-level sets}
\label{sec:sublevel}

A popular way of constructing a filtration of topological spaces and thus persistent homology is to use the sub-level sets of a function.

Given a function $f:M \ra \lR$, a {\em sub-level set} $F_{a}=\{ \bm{x} \in M \mid f(\bm{x}) \leq a\}$ defines a filtration $\lF = \{ F_{a} \mid a \in \lR\}$ (Figure \ref{fig:sub-level}) and its persistent homology $H_q(\mathbb{F})$. A function $f$ is said to be tame if the persistent homology $H_q(\mathbb{F})$ is tame for all $q$.
For a tame function $f:M \ra \lR$, the persistence diagram can be defined and denoted by $D_{q}(f)$.
Note that, for a finite set $X=\{\bm{x}_1, \ldots, \bm{x}_n \}$ in $M \subset \lR^{d}$ and the function $f_{X}:M \ra \lR$ defined by $f_{X}(\bm{x})=\min_{\bm{x}_{i} \in X} d_{M} (\bm{x},\bm{x}_{i})$, $f_{X}$ is tame and the persistence diagram $D_{q}(f_{X})$ of the sub-level set is the same as $D_{q}(X)$.
\begin{figure}[htbp]
 \begin{center}
  \includegraphics[width=60mm]{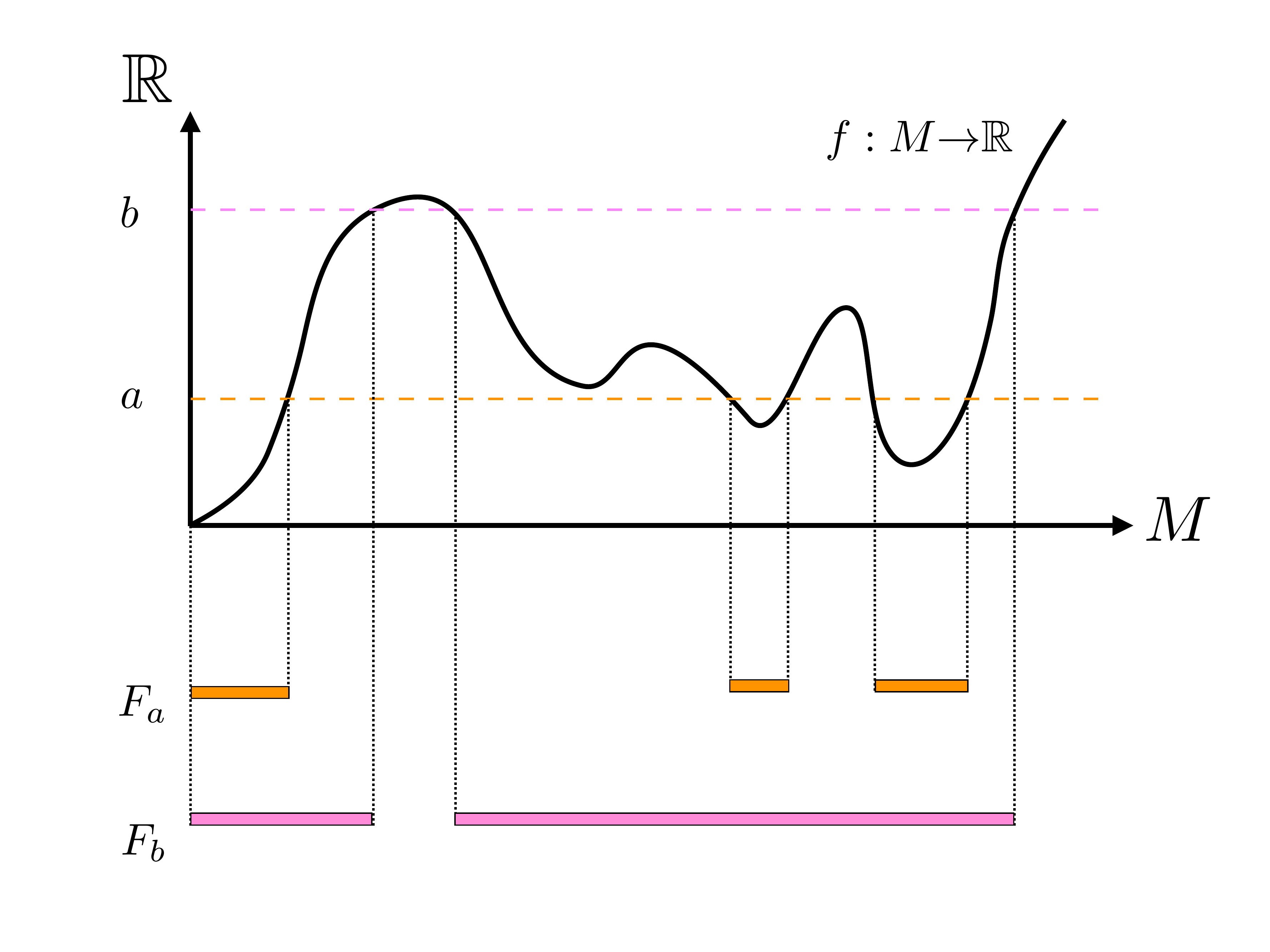}
 \end{center}
  \caption{A filtration $\lF=\{ F_{a} \mid a \in \lR\}$ of sub-level sets.}
  \label{fig:sub-level}
\end{figure}

In case of sub-level sets, the stability of the persistence diagram is shown in \cite{CEH07}.
Here, we measure the difference between two functions $f,g:M \ra \lR$ by $\norm{f-g}_{\infty}=\sup_{\bm{x} \in M} |f(\bm{x})-g(\bm{x})|$.
\begin{prop}[\cite{CEH07}]
\label{prop:function_stability}
Let $M$ be a triangulable compact metric space with continuous tame functions $f,g:M \ra \lR$. 
Then the persistence diagrams satisfy
\[
d_{B}(D_{q}(f),D_{q}(g)) \leq \norm{f-g}_{\infty}.
\]
\end{prop}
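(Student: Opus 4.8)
The plan is to transfer the $\sup$-norm bound on the functions into an algebraic comparison of the two underlying persistence modules, and then conclude by an algebraic stability principle. Set $\ee=\norm{f-g}_{\infty}$ and write $F_{a}=\rl{\bm{x}\in M\mid f(\bm{x})\le a}$ and $G_{a}=\rl{\bm{x}\in M\mid g(\bm{x})\le a}$ for the two sub-level filtrations. First I would record the geometric interleaving: since $|f-g|\le\ee$ pointwise, $f(\bm{x})\le a$ forces $g(\bm{x})\le a+\ee$, so $F_{a}\subset G_{a+\ee}$, and symmetrically $G_{a}\subset F_{a+\ee}$. Composing these gives $F_{a}\subset G_{a+\ee}\subset F_{a+2\ee}$, where the outer inclusion is exactly the filtration inclusion $F_{a}\hookrightarrow F_{a+2\ee}$, and likewise with the roles of $F$ and $G$ exchanged.

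Applying the homology functor $H_q$, I would set $V=\pare{H_q(F_a),\rho^b_a}$ and $W=\pare{H_q(G_a),\sigma^b_a}$ for the two persistence modules. The inclusions above induce families of linear maps $\phi_a\colon H_q(F_a)\ra H_q(G_{a+\ee})$ and $\psi_a\colon H_q(G_a)\ra H_q(F_{a+\ee})$ that commute with the structure maps and satisfy $\psi_{a+\ee}\circ\phi_a=\rho^{a+2\ee}_a$ and $\phi_{a+\ee}\circ\psi_a=\sigma^{a+2\ee}_a$; this is precisely an $\ee$-interleaving of $V$ and $W$. Tameness of $f$ and $g$ — together with compactness and triangulability of $M$, which guarantee $\dim H_q(F_a)<\infty$ and that only finitely many homological critical values occur — ensures that both modules are tame, so their diagrams $D_q(f)=D_q(V)$ and $D_q(g)=D_q(W)$ consist of finitely many off-diagonal points.

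It then remains to invoke the algebraic stability (isometry) theorem: an $\ee$-interleaving of tame persistence modules forces $d_B(D_q(V),D_q(W))\le\ee$, and substituting $\ee=\norm{f-g}_{\infty}$ gives the claim. The hard part is this last step — extracting an explicit bottleneck matching from the interleaving maps. I would prove it by the interpolation scheme of \cite{CEH07}: consider the straight-line homotopy $f_\lambda=(1-\lambda)f+\lambda g$, note $\norm{f_\lambda-f_{\lambda'}}_{\infty}=|\lambda-\lambda'|\,\ee$, and follow the diagram $D_q(f_\lambda)$ as $\lambda$ traverses $[0,1]$. The central device is the Box (Quadrant) Lemma, which writes the number of diagram points inside a rectangle bounded away from $\Delta$ as an alternating sum of ranks of the maps $\rho^b_a$ and shows this count is locally constant in $\lambda$ except when a point crosses the rectangle's boundary. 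Hence the diagram vertices trace continuous trajectories of speed at most $\ee$; concatenating these across the finitely many critical values of $\lambda$ produces a matching of displacement $\le\ee$, and taking the infimum over matchings yields the bottleneck bound. The delicate points to handle carefully are the behaviour at the diagonal (generators emerging from or merging into $\Delta$) and the verification that each interpolant $f_\lambda$ remains tame.
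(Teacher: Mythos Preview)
The paper does not prove this proposition: it is quoted directly from \cite{CEH07} as a known result, with no argument given beyond the citation. There is therefore no ``paper's proof'' to compare against.

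Your sketch is a faithful outline of the original Cohen--Steiner--Edelsbrunner--Harer argument: the sub-level interleaving $F_a\subset G_{a+\ee}\subset F_{a+2\ee}$, the straight-line interpolation $f_\lambda$, and the Box/Quadrant Lemma tracking generator trajectories are exactly the ingredients of \cite{CEH07}. You have additionally phrased the middle step in the language of algebraic stability ($\ee$-interleaving of persistence modules implies $d_B\le\ee$), which is the more modern formulation due to Chazal--Cohen-Steiner--Glisse--Guibas--Oudot and later refined in \cite{CdSO14}; strictly speaking \cite{CEH07} does not isolate this as a separate theorem but proves the bottleneck bound directly via the interpolation. Either route is fine. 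The caveats you flag --- tameness of each $f_\lambda$ and the bookkeeping at the diagonal --- are indeed the technical points that need care, and they are handled in the cited reference.
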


For finite subsets $X$ and $Y$ in a metric space $M$, since the difference of norm $\norm{f_{X}-f_{Y}}_{\infty}$ is nothing but the Hausdorff distance $d_{H}(X,Y)$, we obtain Proposition 2.1 as a corollary.

\section{Total persistence}
\label{sec:total}

In order to estimate constants $L$ appearing in Theorem 3.1, we will review several properties of persistence.

Let $(M,d_{M})$ be a triangulable compact metric space.
For a Lipschitz function $f:M \ra \lR$, the {\em degree-$p$ total persistence} is defined by 
\begin{equation*}
\Pers_{p}(f,t)=\sum_{\substack{x \in D_{q}(f) \\ \pers(x)>t}} \pers(x)^{p} 
\end{equation*}
for $0 \leq t \leq \Amp(f)$.
Here, $\Amp(f) := \max_{\bm{x} \in M}f(\bm{x})-\min_{\bm{x} \in M} f(\bm{x})$ is the amplitude of $f$.
Let $K$ be a triangulated simplicial complex of $M$ by a homeomorphism $\vartheta: |K| \ra M$.
The diameter of a simplex $\sigma \in K$ and the mesh of the triangulation $K$ are defined by $\diam(\sigma) = \max_{\bm{x},\bm{y} \in \sigma} d_{M}(\vartheta(\bm{x}),\vartheta(\bm{y}))$ and $\mesh(K) = \max_{\sigma \in K} \diam(\sigma )$, respectively.
Furthermore, let us set $N(r)= \min_{\mesh(K ) \leq r} \card{K}$.
The degree-$p$ total persistence is bounded as follows:
\begin{lem}[\cite{CEHM10}]
Let $M$ be a triangulable compact metric space and $f:M \ra \lR$ be a tame Lipschitz function.
Then $\Pers_{p}(f,t)$ is bounded from above by
\[
t^{p}N(t/\Lip(f)) + p \int^{\Amp(f)}_{\ee=t} N(\ee/\Lip(f))\ee^{p-1}d\ee,
\]
where $\Lip(f)$ is the Lipschitz constant of $f$.
\end{lem}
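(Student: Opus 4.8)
The plan is to reduce the bound to a single counting estimate on the persistence diagram and then assemble the claimed expression by an elementary layer-cake computation. For $\ee\in(0,\Amp(f)]$ write $\mu(\ee)$ for the number of off-diagonal points $x\in D_{q}(f)$ with $\pers(x)>\ee$. The heart of the matter is the counting inequality
\[
\mu(\ee)\le N(\ee/\Lip(f)),\qquad 0<\ee\le\Amp(f),
\]
which says that the number of features surviving longer than $\ee$ is controlled by the combinatorial size of the coarsest triangulation whose mesh is fine enough (relative to $\Lip(f)$) to resolve such features. Granting this, the remaining work is bookkeeping.

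To establish the counting inequality I would argue as follows. Fix $\ee$, set $r=\ee/\Lip(f)$, and choose a triangulation $\vartheta:|K|\ra M$ realizing the minimum $\card{K}=N(r)$ with $\mesh(K)\le r$. Since $f$ is Lipschitz and each simplex has diameter at most $r$ in $M$, the oscillation of $f$ on every simplex is at most $\Lip(f)\cdot r=\ee$. I then replace $f$ by a simplexwise-defined function $g$ on $K$ (for instance the piecewise-linear interpolant of the vertex values, or the filtration assigning to $\sigma$ a value within the $f$-range on $\sigma$), so that $\norm{f-g}_{\infty}\le\ee$ and the sublevel-set filtration of $g$ is a simplicial filtration of $K$. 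Two facts then combine: first, by Proposition \ref{prop:function_stability} the bottleneck distance satisfies $d_{B}(D_{q}(f),D_{q}(g))\le\norm{f-g}_{\infty}$, so in an optimal matching any point of $D_{q}(f)$ far from the diagonal must be paired with a genuine off-diagonal point of $D_{q}(g)$; second, in a simplicial filtration each simplex creates or destroys at most one homology class, so the number of off-diagonal points of $D_{q}(g)$ is at most $\card{K}=N(r)$. Tracking the long-persistence points of $D_{q}(f)$ through this matching yields $\mu(\ee)\le N(\ee/\Lip(f))$.

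With the counting inequality in hand, I would assemble the bound using the identity $\pers(x)^{p}=t^{p}+\int_{t}^{\pers(x)}p\,s^{p-1}\,ds$, valid for $\pers(x)\ge t$. Summing over the finitely many off-diagonal generators with $\pers(x)>t$ and interchanging sum and integral by Fubini gives
\[
\Pers_{p}(f,t)=t^{p}\mu(t)+p\int_{t}^{\infty}\ee^{p-1}\mu(\ee)\,d\ee=t^{p}\mu(t)+p\int_{t}^{\Amp(f)}\ee^{p-1}\mu(\ee)\,d\ee,
\]
where the upper limit drops to $\Amp(f)$ because every generator satisfies $\pers(x)\le\Amp(f)$ (births are at least $\min_{M}f$ and deaths at most $\max_{M}f$), so $\mu(\ee)=0$ for $\ee\ge\Amp(f)$. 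Substituting $\mu(t)\le N(t/\Lip(f))$ and $\mu(\ee)\le N(\ee/\Lip(f))$, and using that $N$ is nonnegative, produces exactly the stated right-hand side.

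The main obstacle is the counting inequality, and specifically obtaining it with the sharp constant $N(\ee/\Lip(f))$ rather than a weaker version carrying an extra factor inside the argument of $N$. The naive piecewise-linear comparison controls $\norm{f-g}_{\infty}$ by $\ee$ but, through the bottleneck matching, only directly bounds the number of points with persistence exceeding $2\ee$; recovering the claimed constant requires choosing the comparison filtration symmetrically about the $f$-range on each simplex while preserving monotonicity under faces, or arguing directly by the ranks of the induced maps $H_{q}(F_{a})\ra H_{q}(F_{a+\ee})$. The layer-cake assembly, by contrast, is routine.
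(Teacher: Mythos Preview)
The paper does not supply a proof of this lemma; it is quoted from \cite{CEHM10} and used only as input to Lemma~\ref{lem:total}. Your two-step outline---the counting inequality $\mu(\ee)\le N(\ee/\Lip(f))$ followed by the layer-cake identity---is exactly the structure of the argument in \cite{CEHM10}, and your layer-cake computation is correct as written.

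The only substantive issue is the one you flag yourself. Comparing $f$ with the PL interpolant $g$ and invoking Proposition~\ref{prop:function_stability} yields $d_{B}(D_{q}(f),D_{q}(g))\le\ee$, which bounds only $\mu(2\ee)$ by $\card{K}$, i.e.\ $\mu(\ee)\le N\bigl(\ee/(2\Lip(f))\bigr)$. The loss comes from treating the comparison symmetrically. The sharper route is your second suggestion (ranks of induced maps): set $h(\sigma):=\max_{\sigma}f$ on a triangulation $K$ with $\mesh(K)\le\ee/\Lip(f)$, so the sublevel complexes $K'_{a}=\{\sigma:h(\sigma)\le a\}$ satisfy the one-sided sandwich
\[
|K'_{a}|\subset F_{a}\subset |K'_{a+\ee}|\qquad(a\in\lR),
\]
because any simplex meeting $F_{a}$ lies entirely in $F_{a+\ee}$. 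Shifting the simplicial persistence module by $\ee/2$ converts this into a genuine $\ee/2$-interleaving with $H_{q}(F_{\bullet})$; hence the bottleneck distance between $D_{q}(f)$ and the shifted simplicial diagram is at most $\ee/2$, and every point of $D_{q}(f)$ with persistence exceeding $\ee$ (distance $>\ee/2$ from the diagonal) must pair with an off-diagonal simplicial point. This gives $\mu(\ee)\le\card{K}=N(\ee/\Lip(f))$ with the stated constant, closing the gap.

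For the application in this paper the constant is in any case immaterial: the only consequence drawn is Lemma~\ref{lem:total} via $N(r)\le C_{M}r^{-d}$, and an extra factor of $2$ inside $N$ would simply be absorbed into $C_{M}$.
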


For a compact triangulable subspace $M$ in $\lR^{d}$, the number of $d$-cubes with length $r>0$ covering $M$ is bounded for every $r>0$.
The minimum of these numbers is $N(r)$ and bounded from above by $C_{M}/r^{d}$ for some constant $C_{M}$ depending only on $M$.

For $p>d$, we can find upper bounds for the both terms as follows:
\begin{align*}
t^{p}N(t/\Lip(f)) \leq t^{p}C_{M} \frac{ \Lip(f)^{d}} {t^{d}} 
\end{align*}
and
\begin{align*}
p \int^{\Amp(f)}_{\ee=t} N(\ee/\Lip(f))\ee^{p-1}d\ee \leq \frac{p}{p-d} C_{M} \Lip(f)^{d} \Amp(f)^{p-d} .
\end{align*}
We note $\lim_{t \ra 0} t^{p}N(t/\Lip(f))=0$ for $p>d$.
Then, an upper bound of the total persistence $\Pers(f):=\Pers_{p}(f,0)$ is given as follows:
\begin{lem}
\label{lem:total}
Let $M$ be a triangulable compact subspace in $\lR^{d}$ and $p>d$.
For any Lipschitz function $f:M \ra \lR$, 
\[
\Pers_{p}(f) \leq \frac{p}{p-d}C_{M} \Lip(f)^{d} \Amp(f)^{p-d},
\]
where $C_{M}$ is a constant depending only on $M$.
\end{lem}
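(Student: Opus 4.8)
The plan is to derive this bound directly from the lemma of \cite{CEHM10} quoted just above, by inserting the covering-number inequality $N(r)\leq C_M/r^{d}$ recorded for compact triangulable $M\subset\lR^{d}$ and then letting the truncation level tend to $0$. Almost all of the analytic content is already packaged in those two inputs, so the argument is essentially a limiting computation.

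First I would record the elementary observation that, since $f$ is tame on the compact space $M$, the diagram $D_q(f)$ has only finitely many off-diagonal generators; hence $\Pers_p(f,t)$ is a finite sum for every $t\geq 0$, it is nondecreasing as $t\ra 0^{+}$ (the index set $\{x:\pers(x)>t\}$ grows), and $\lim_{t\ra 0^{+}}\Pers_p(f,t)=\Pers_p(f,0)=\Pers_p(f)$. It therefore suffices to bound $\Pers_p(f,t)$ uniformly for $t>0$ and pass to the limit. Applying the cited lemma gives, for each $t>0$,
\[
\Pers_p(f,t)\leq t^{p}N\!\left(t/\Lip(f)\right)+p\int_{\ee=t}^{\Amp(f)}N\!\left(\ee/\Lip(f)\right)\ee^{p-1}\,d\ee,
\]
and substituting $N(\ee/\Lip(f))\leq C_M\Lip(f)^{d}/\ee^{d}$ into the integrand, together with the hypothesis $p>d$, lets me evaluate the integral explicitly:
\[
p\int_{\ee=t}^{\Amp(f)}N\!\left(\ee/\Lip(f)\right)\ee^{p-1}\,d\ee\leq pC_M\Lip(f)^{d}\int_{\ee=t}^{\Amp(f)}\ee^{p-1-d}\,d\ee=\frac{p}{p-d}C_M\Lip(f)^{d}\bigl(\Amp(f)^{p-d}-t^{p-d}\bigr).
\]

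Finally I would let $t\ra 0^{+}$. The boundary term vanishes, since $t^{p}N(t/\Lip(f))\leq C_M\Lip(f)^{d}\,t^{p-d}\ra 0$ for $p>d$, and the subtracted $t^{p-d}\ra 0$ as well, leaving precisely $\tfrac{p}{p-d}C_M\Lip(f)^{d}\Amp(f)^{p-d}$ on the right-hand side; combining with the first step yields the claim. I do not expect a genuine obstacle here: the only point needing care is the justification that $\Pers_p(f,t)\ra\Pers_p(f)$ as $t\ra 0$, which rests entirely on tameness, so that the finite off-diagonal mass of the diagram is fully recovered in the limit and nothing is lost.
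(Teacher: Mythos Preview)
Your proposal is correct and follows essentially the same route as the paper: both plug the covering bound $N(r)\leq C_M/r^{d}$ into the \cite{CEHM10} inequality, bound the integral using $p>d$, and let $t\ra 0$ so that the boundary term $t^{p}N(t/\Lip(f))\leq C_M\Lip(f)^{d}t^{p-d}$ vanishes. Your version is marginally more explicit in justifying the limit $\Pers_p(f,t)\ra\Pers_p(f)$ via tameness and in tracking the $-t^{p-d}$ term from the integral, but these are cosmetic differences.
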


In case of a finite subset $X \subset \lR^{d}$, there always exists an $R$-ball $M$ containing $X$ for some $R>0$, which is a triangulable compact subspace in $\lR^{d}$, and the total persistence of $f_{X}$ is bounded as follows:

\begin{lem}
\label{lem:point_total}
Let $M$ be a triangulable compact subspace in $\lR^{d}$, $X=\{\bm{x}_{1},\ldots,\bm{x}_{n}\}$ be a finite subset of $M$, and $p>d$. Then 
\[
\Pers_{p}(f_{X}) \leq \frac{p}{p-d}C_{M}\diam(M)^{p-d},
\]
where $C_{M}$ is a constant depending only on $M$.
\end{lem}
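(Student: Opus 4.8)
The plan is to obtain this as an immediate corollary of Lemma \ref{lem:total} applied to the specific function $f_{X}(\bm{x})=\min_{\bm{x}_{i}\in X}d_{M}(\bm{x},\bm{x}_{i})$. Lemma \ref{lem:total} bounds $\Pers_{p}(f)$ by $\frac{p}{p-d}C_{M}\Lip(f)^{d}\Amp(f)^{p-d}$ for any Lipschitz $f$, so all I need to do is control the two geometric quantities $\Lip(f_{X})$ and $\Amp(f_{X})$ from above. The genuinely hard analytic work (the covering estimate $N(r)\leq C_{M}/r^{d}$ and the integral bound on the total persistence) is already packaged inside Lemma \ref{lem:total}, so the remaining argument is short.

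First I would show $\Lip(f_{X})\leq 1$. Since $f_{X}$ is a pointwise minimum of the distance functions $\bm{x}\mapsto d_{M}(\bm{x},\bm{x}_{i})$, the reverse triangle inequality gives $|f_{X}(\bm{x})-f_{X}(\bm{y})|\leq d_{M}(\bm{x},\bm{y})$ for all $\bm{x},\bm{y}\in M$, so $f_{X}$ is $1$-Lipschitz and $\Lip(f_{X})^{d}\leq 1$. Next I would bound the amplitude by $\Amp(f_{X})\leq\diam(M)$: because $f_{X}\geq 0$ everywhere and $f_{X}(\bm{x}_{i})=0$ for each data point $\bm{x}_{i}\in X\subset M$, the minimum of $f_{X}$ over $M$ is exactly $0$; and for any $\bm{x}\in M$ we have $f_{X}(\bm{x})\leq d_{M}(\bm{x},\bm{x}_{1})\leq\diam(M)$, so the maximum is at most $\diam(M)$. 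Since $p>d$ the exponent $p-d$ is positive, hence $\Amp(f_{X})^{p-d}\leq\diam(M)^{p-d}$.

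Substituting these two bounds into the conclusion of Lemma \ref{lem:total}, which applies because $f_{X}$ is tame and Lipschitz on the triangulable compact $M$ (as recorded earlier), yields
\[
\Pers_{p}(f_{X})\leq\frac{p}{p-d}C_{M}\Lip(f_{X})^{d}\Amp(f_{X})^{p-d}\leq\frac{p}{p-d}C_{M}\diam(M)^{p-d},
\]
with the same constant $C_{M}$ depending only on $M$. The only point requiring care — and it is minor — is verifying that $f_{X}$ meets the hypotheses of Lemma \ref{lem:total}, namely that it is a tame Lipschitz function so that its total persistence is defined and the lemma is applicable; the identity $\min_{M}f_{X}=0$ is what makes the amplitude estimate collapse cleanly to $\diam(M)$.
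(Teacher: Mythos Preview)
Your proof is correct and matches the paper's argument essentially line for line: both verify $\Lip(f_{X})\leq 1$ via the triangle inequality, bound $\Amp(f_{X})\leq\diam(M)$ using $\min_{M}f_{X}=0$, and then invoke Lemma~\ref{lem:total}.
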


\begin{proof}
The Lipschitz constant of $f_{X}$ is $1$, because, for any $\bm{x},\bm{y} \in M$,
\begin{align*}
 f_{X}(\bm{x})-f_{X}(\bm{y}) 
&=\min_{\bm{x}_{i} \in X} d(\bm{x},\bm{x}_{i})-\min_{\bm{x}_{i} \in X} d(\bm{y},\bm{x}_{i}) \\
&\leq \min_{\bm{x}_{i} \in X} (d(\bm{x},\bm{y})+d(\bm{y},\bm{x}_{i})) - \min_{\bm{x}_{i} \in X} d(\bm{y},\bm{x}_{i}) \\
&=d(\bm{x},\bm{y}).
\end{align*}
Moreover, the amplitude $\Amp(f_{X})$ is less than or equal to $\diam(M)$, because $\min_{\bm{x} \in M} f_{X}(\bm{x})=0$ and $\max_{\bm{x} \in M} f_{X}(\bm{x}) \leq \diam(M)$.
\end{proof}

For more general, for a persistence diagram $D$, we define $p$-degree total persistence of $D$ by $\Pers_{p}(D):=\sum_{x \in D} \pers(x)^{p}$.
Let $x_{1},\ldots,x_{n}$ be points of $\underline{D}$.
Then we can consider the $n$-dimensional vector
\[
v(D):=\pare{ \pers(x_{1}),\ldots,\pers(x_{n}) }
\]
from $D$.
Since each $\pers(x) \ (x \in \underline{D})$ is always positive, by using the $\ell^{p}$-norm of $v(D)$, we have $\Pers_{p}(D)=\norm{v(D)}^{p}_{p}$.
In general, since $\norm{v}_{q} \leq \norm{v}_{p} \ (v \in \lR^{n}, \ 1 \leq p \leq q)$, we have
\[
\Pers_{q}(D)^{\frac{1}{q}} =\norm{v(D)}_{q} \leq \norm{v(D)}_{p} = \Pers_{p}(D)^{\frac{1}{p}}.
\]

\begin{prop}
\label{prop:persistence_inequality}
If $1 \leq p \leq q<\infty$ and $\Pers_{p}(D)$ is bounded, $\Pers_{q}(D)$ is also bounded.
\end{prop}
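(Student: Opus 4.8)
The plan is to exploit the representation of total persistence as an $\ell^p$-norm of the persistence vector, which the excerpt has just established. Concretely, for a persistence diagram $D$ with generators $x_1,\dots,x_n$ in $\underline{D}$, we have $\Pers_p(D)=\norm{v(D)}_p^p$ where $v(D)=(\pers(x_1),\dots,\pers(x_n))$ and every coordinate $\pers(x_i)$ is strictly positive. The inequality $\Pers_q(D)^{1/q}\leq \Pers_p(D)^{1/p}$ for $1\leq p\leq q<\infty$ is precisely the monotonicity of $\ell^p$-norms in $p$ on $\lR^n$, which is stated just before the proposition. So the entire statement reduces to reading off a boundedness consequence from this norm comparison.

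First I would note that if $\underline{D}$ has only finitely many generators, then both $\Pers_p(D)$ and $\Pers_q(D)$ are finite sums and the claim is immediate; this is the relevant case for the finite fattened ball model treated in the paper. The substantive content is therefore the case of a countably infinite multiset, where $\Pers_p(D)=\sum_{x\in D}\pers(x)^p$ is an infinite series. Here I would interpret the $\ell^p$-norm inequality as an inequality on the partial sums over any finite subcollection of generators, and then pass to the supremum over all finite subcollections. Since $\Pers_p(D)<\infty$ by hypothesis, the partial sums of $\sum \pers(x)^p$ are uniformly bounded, and the norm comparison gives that the partial sums of $\sum \pers(x)^q$ are bounded by $\Pers_p(D)^{q/p}$, a fixed finite quantity independent of the chosen finite subcollection.

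The key steps in order are: (i) write $\Pers_p(D)$ and $\Pers_q(D)$ as (possibly infinite) sums of positive terms; (ii) for each finite subcollection $F\subset\underline{D}$, apply the finite-dimensional inequality $\norm{v_F}_q\leq\norm{v_F}_p$ to the truncated vector $v_F$, yielding $\sum_{x\in F}\pers(x)^q\leq\bigl(\sum_{x\in F}\pers(x)^p\bigr)^{q/p}\leq\Pers_p(D)^{q/p}$; and (iii) take the supremum over all finite $F$ to conclude $\Pers_q(D)\leq\Pers_p(D)^{q/p}<\infty$. Boundedness is exactly finiteness of the series of nonnegative terms, which is equivalent to the boundedness of its partial sums.

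The main obstacle, such as it is, is purely a matter of care with the infinite case: the stated finite-dimensional norm inequality $\norm{v}_q\leq\norm{v}_p$ applies to vectors in $\lR^n$, so one must justify the passage from finite truncations to the full (possibly infinite) multiset rather than applying the inequality directly to an infinite vector. This is handled cleanly by the monotone convergence of partial sums of nonnegative series, and requires that the comparison constant in step (ii) be uniform in the truncation, which it is since $\Pers_p(D)^{q/p}$ does not depend on $F$. No deeper difficulty arises, so the proof is essentially a one-line consequence of the preceding $\ell^p$-norm monotonicity together with this truncation argument.
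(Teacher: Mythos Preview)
Your proposal is correct and follows the same approach as the paper: the proposition is stated immediately after the displayed inequality $\Pers_q(D)^{1/q}=\norm{v(D)}_q\leq\norm{v(D)}_p=\Pers_p(D)^{1/p}$, and that inequality \emph{is} the proof. Your truncation argument for the countably infinite case is extra care the paper does not spell out (it works with finite $\underline{D}$ throughout), but it is correct and changes nothing essential.
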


\section{Proof of Theorem 3.1 and its generalization}
\label{sec:stability}

We can have the following generalized stability result:
\begin{thm}
\label{thm:general_stability}
Let $D$ and $E$ be finite persistence diagrams.
Then
\begin{align*}
d^{w_{{\rm arc}}}_{k_{G}}(D,E) \leq L(D,E;C,p,\sigma) d_{B}(D,E),
\end{align*}
where $L(D,E;C,p,\sigma)$ is a constant depending on $D,E,C,p,\sigma$.
\end{thm}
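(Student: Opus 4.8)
The plan is to bound the RKHS difference by a sum over a near-optimal bottleneck matching and to control that sum using the total persistence estimates of Section~\ref{sec:total}. Fix $\ee>0$ and, since $D,E$ are finite, choose a multi-bijection $\gamma:D\to E$ with $\sup_{x\in D}\|x-\gamma(x)\|_\infty\le d_B(D,E)+\ee$. The essential observation is that diagonal generators carry zero weight: $w_{\rm arc}(x)=\arctan(C\,\pers(x)^p)=0$ whenever $\pers(x)=0$. Hence, writing $E_{k_G}(\mu_D^{w_{\rm arc}})-E_{k_G}(\mu_E^{w_{\rm arc}})=\sum_{x\in D}\big(w_{\rm arc}(x)k_G(\cdot,x)-w_{\rm arc}(\gamma(x))k_G(\cdot,\gamma(x))\big)$ and applying the triangle inequality in $\cH_{k_G}$, the only nonzero summands come from pairs in which at least one endpoint is off-diagonal, and these are finite in number.

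First I would record two elementary kernel estimates: $\|k_G(\cdot,y)\|_{\cH_{k_G}}=\sqrt{k_G(y,y)}=1$, and $\|k_G(\cdot,x)-k_G(\cdot,y)\|_{\cH_{k_G}}^2=2\big(1-e^{-\|x-y\|^2/2\sigma^2}\big)\le \|x-y\|^2/\sigma^2$ from $1-e^{-t}\le t$, hence $\|k_G(\cdot,x)-k_G(\cdot,y)\|_{\cH_{k_G}}\le \|x-y\|/\sigma$. For a pair with both endpoints off-diagonal, the splitting $w(x)k_G(\cdot,x)-w(y)k_G(\cdot,y)=w(x)\big(k_G(\cdot,x)-k_G(\cdot,y)\big)+\big(w(x)-w(y)\big)k_G(\cdot,y)$ (writing $w=w_{\rm arc}$, $y=\gamma(x)$) gives the per-term bound $w(x)\|x-y\|/\sigma+|w(x)-w(y)|$; for a pair with one diagonal endpoint the term reduces to the off-diagonal weight $w(x)$, which equals $|w(x)-w(\gamma(x))|$. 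I would then sum the kernel-difference and weight-difference contributions separately.

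For the kernel-difference contributions I would use $\|x-y\|\le\sqrt2(d_B+\ee)$ together with $\arctan(u)\le u$ to get $\sum_{x}w(x)\le C\sum_{x\in\underline D}\pers(x)^p=C\,\Pers_p(D)$, yielding the bound $\tfrac{\sqrt2 C}{\sigma}(d_B+\ee)\Pers_p(D)$. The weight-difference contributions are the crux. Here I would apply the mean value theorem to $g(t)=\arctan(Ct^p)$, using $g'(t)=\tfrac{Cp\,t^{p-1}}{1+C^2t^{2p}}\le Cp\,t^{p-1}$ and $|\pers(x)-\pers(y)|\le 2\|x-y\|_\infty$, so that $|w(x)-w(y)|\le 2Cp\max(\pers(x),\pers(y))^{p-1}\|x-y\|_\infty$. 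For a pair matched to the diagonal the same estimate applies after noting $\pers(x)\le 2\|x-\gamma(x)\|_\infty$, since for $x=(b,d)$ every diagonal point lies at $\ell^\infty$-distance at least $\pers(x)/2$. Summing and using $\max(\cdot,\cdot)^{p-1}\le \pers(x)^{p-1}+\pers(y)^{p-1}$ bounds this contribution by $2Cp(d_B+\ee)\big(\Pers_{p-1}(D)+\Pers_{p-1}(E)\big)$.

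Combining and letting $\ee\to0$ gives $d^{w_{\rm arc}}_{k_G}(D,E)\le L(D,E;C,p,\sigma)\,d_B(D,E)$ with $L=\tfrac{\sqrt2 C}{\sigma}\Pers_p(D)+2Cp\big(\Pers_{p-1}(D)+\Pers_{p-1}(E)\big)$, which is finite because $D$ and $E$ are finite diagrams. I expect the main obstacle to be the weight-difference term: one must simultaneously linearize $w_{\rm arc}$ (controlling $g'$ both near and far from the diagonal) and convert diagonal matches into a $\pers^{p-1}$ weight, so that the whole sum collapses to total persistence of degree $p-1$ rather than degree $p$. This degree-$(p-1)$ total persistence is precisely what will force the hypothesis $p>d+1$ in the specialization to Theorem~\ref{thm:kernel_stability}, through Lemma~\ref{lem:point_total}; for the present finite statement, however, only the finiteness of $\Pers_{p-1}$ and $\Pers_p$ is required.
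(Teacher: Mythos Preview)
Your proof is correct and follows essentially the same route as the paper's: pick a near-optimal bottleneck matching, split each summand as $w(x)\bigl(k_G(\cdot,x)-k_G(\cdot,\gamma(x))\bigr)+\bigl(w(x)-w(\gamma(x))\bigr)k_G(\cdot,\gamma(x))$, bound the first piece by the kernel Lipschitz estimate (Lemma~\ref{lemm:Lip_k}) and the second by the $\arctan$ Lipschitz estimate (Lemma~\ref{lemm:w_continuous}), then sum using total persistence. Your organization is in fact slightly cleaner than the paper's: by invoking $w_{\rm arc}=0$ on the diagonal you treat all matched pairs uniformly, whereas the paper separates out the set $E'\subset\underline E$ of points matched to the diagonal and bounds $\sum_{y\in E'}w_{\rm arc}(y)$ by $C\Pers_{p-1}(E)\,\ee$ directly---this is why the paper's constant carries a $(2p+1)\Pers_{p-1}(E)$ while yours has the symmetric $2p\Pers_{p-1}(E)$.
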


In fact, we can calculate the constant $L(D,E;C,p,\sigma)$ in Theorem \ref{thm:general_stability} such as
\begin{align*}
\biggl\{ \frac{\sqrt{2}}{\sigma} \Pers_{p}(D)+ 2p\Pers_{p-1}(D)  +(2p+1)\Pers_{p-1}(E) \biggr\} C.
\end{align*}
Actually, this constant is dependent on $D$ and $E$, and hence we cannot say that the map $D \mapsto E_{k_{G}}(\mu^{w_{\rm arc}}_{D})$ is continuous.
However, in the case of persistence diagrams obtained from finite sets, this constant becomes independent of $D$ and $E$.
From now on, to obtain the constant $L(D,E;C,p,\sigma)$, we show several lemmas.

\begin{lem}
\label{lemm:Lip_k}
For any $x,y \in \lR^{2}$, $\norm{k_{G}(\cdot,x)-k_{G}(\cdot,y)}_{\cH_{k_{G}}} \leq \frac{\sqrt{2}}{\sigma} \norm{x-y}_{\infty}$.
\end{lem}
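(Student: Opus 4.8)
The plan is to compute the RKHS norm on the left directly by exploiting the reproducing property, which converts the Hilbert-space distance between two feature maps into a simple evaluation of the kernel at the relevant points. Concretely, for any $x,y\in\lR^2$ I would expand
\[
\norm{k_{G}(\cdot,x)-k_{G}(\cdot,y)}_{\cH_{k_{G}}}^{2}
= \inn{k_{G}(\cdot,x)}{k_{G}(\cdot,x)}_{\cH_{k_{G}}}
-2\inn{k_{G}(\cdot,x)}{k_{G}(\cdot,y)}_{\cH_{k_{G}}}
+\inn{k_{G}(\cdot,y)}{k_{G}(\cdot,y)}_{\cH_{k_{G}}}.
\]
By the reproducing property $\inn{k_{G}(\cdot,x)}{k_{G}(\cdot,y)}_{\cH_{k_{G}}}=k_{G}(x,y)$, and since $k_{G}(x,x)=1$ for the Gaussian kernel, this equals $2-2k_{G}(x,y)=2\pare{1-e^{-\norm{x-y}^{2}/(2\sigma^{2})}}$. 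So the whole statement reduces to the elementary scalar inequality $2\pare{1-e^{-\norm{x-y}^{2}/(2\sigma^{2})}}\leq \frac{2}{\sigma^{2}}\norm{x-y}_{\infty}^{2}$.

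Next I would establish that scalar bound. Using $1-e^{-s}\leq s$ for $s\geq 0$ with $s=\norm{x-y}^{2}/(2\sigma^{2})$ gives $2\pare{1-e^{-s}}\leq 2s=\norm{x-y}^{2}/\sigma^{2}$, where $\norm{\cdot}$ is the Euclidean norm. Then the norm equivalence $\norm{x-y}^{2}\leq 2\norm{x-y}_{\infty}^{2}$ in $\lR^{2}$ (since the Euclidean norm squared is the sum of two coordinate squares, each at most $\norm{x-y}_{\infty}^{2}$) yields $\norm{x-y}^{2}/\sigma^{2}\leq \frac{2}{\sigma^{2}}\norm{x-y}_{\infty}^{2}$. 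Taking square roots of the resulting bound $\norm{k_{G}(\cdot,x)-k_{G}(\cdot,y)}_{\cH_{k_{G}}}^{2}\leq \frac{2}{\sigma^{2}}\norm{x-y}_{\infty}^{2}$ produces exactly $\frac{\sqrt{2}}{\sigma}\norm{x-y}_{\infty}$, as claimed.

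There is essentially no hard part here: the lemma is a Lipschitz estimate for the canonical feature map of the Gaussian kernel, and the only ingredients are the reproducing property, the normalization $k_{G}(x,x)=1$, the convexity inequality $1-e^{-s}\leq s$, and the comparison of the $\ell^{2}$ and $\ell^{\infty}$ norms on $\lR^{2}$. The one point to handle with minor care is the norm conversion constant: I must track the factor $2$ arising from $\norm{\cdot}_{2}^{2}\leq 2\norm{\cdot}_{\infty}^{2}$ in dimension $2$, since this is precisely what combines with the factor from $1-e^{-s}\leq s$ to give the stated constant $\sqrt{2}/\sigma$ rather than something larger. This lemma will then feed into the main stability argument, where summing such per-generator Lipschitz bounds (weighted by $w_{\rm arc}$ and controlled via the total-persistence estimates of Lemmas \ref{lem:total} and \ref{lem:point_total}) produces the global constant $L$ in Theorem \ref{thm:general_stability}.
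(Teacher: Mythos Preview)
Your proposal is correct and follows essentially the same approach as the paper: expand the squared RKHS norm via the reproducing property to obtain $2\bigl(1-e^{-\norm{x-y}^{2}/(2\sigma^{2})}\bigr)$, apply $1-e^{-t}\leq t$, and then use $\norm{x-y}^{2}\leq 2\norm{x-y}_{\infty}^{2}$ in $\lR^{2}$. The steps and constants match the paper's argument exactly.
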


\begin{proof}
\begin{align}
\norm{k_{G}(\cdot,x)-k_{G}(\cdot,y)}^{2}_{\cH_{k_{G}}} \nonumber 
&=k_{G}(x,x)+k_{G}(y,y)-2k_{G}(x,y) \nonumber \\
&=1+1-2 e^{-\frac{\norm{x-y}^{2}}{2 \sigma^{2}}} \nonumber \\
&= 2 \pare{ 1-e^{-\frac{\norm{x-y}^{2}}{2 \sigma^{2}}} } \nonumber  \\
& \leq  \frac{1}{\sigma^{2}}\norm{x-y}^{2} \label{eq:eq1} \\
& \leq \frac{2}{\sigma^{2}}\norm{x-y}^{2}_{\infty}. \label{eq:eq2} 
\end{align}
We have used the fact $1-e^{-t} \leq t \ ( t \in \lR)$ in \eqref{eq:eq1} and $\norm{x}^{2} \leq 2 \norm{x}^{2}_{\infty} \ (x \in \lR^{2})$ in \eqref{eq:eq2}.
\end{proof}

\begin{lem}
\label{lemm:persistence}
For any $x,y \in \lR^{2}$, the difference of persistences $\abs{\pers(x)-\pers(y)}$ is less than or equal to $2 \norm{x-y}_{\infty}$.
\end{lem}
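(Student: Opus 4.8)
The plan is to reduce the claim to the one-dimensional triangle inequality after unfolding the definition of persistence. Recall that a generator $x=(x^1,x^2)\in\lR^2$ encodes a birth-death pair, so that $\pers(x)=x^2-x^1$, and likewise $\pers(y)=y^2-y^1$ for $y=(y^1,y^2)$. Thus the entire statement is about a fixed linear functional applied to the two points, and the bound should fall out of norm comparison on $\lR^2$.

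First I would write the difference of persistences as a single linear combination of the coordinate differences, namely
\[
\pers(x)-\pers(y)=(x^2-y^2)-(x^1-y^1).
\]
Next I would apply the triangle inequality in $\lR$ to obtain $\abs{\pers(x)-\pers(y)}\leq\abs{x^2-y^2}+\abs{x^1-y^1}$, and then bound each coordinate difference by the sup-norm, using that $\abs{x^i-y^i}\leq\norm{x-y}_\infty$ for $i=1,2$ by the very definition of $\norm{\cdot}_\infty$. Summing the two estimates produces the factor $2$ and yields $\abs{\pers(x)-\pers(y)}\leq 2\norm{x-y}_\infty$, as desired.

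I do not expect any genuine obstacle here: the inequality is an elementary consequence of the fact that $\pers$ is a linear functional whose $\ell^\infty$-operator norm is at most $2$, the factor $2$ being exactly what one pays for passing from the $\ell^1$ bound on the two coordinates to the $\ell^\infty$ norm of $x-y$. The only point requiring minor care is the sign bookkeeping in the expansion, but the absolute value absorbs it. This lemma is presumably meant to be combined with Lemma \ref{lemm:Lip_k} to control the weighted embedding $E_{k_G}(\mu_D^{w_{\rm arc}})$, where the factor $2$ together with the Lipschitz behavior of $w_{\rm arc}$ and of $k_G$ propagates into the final constant $L$ appearing in Theorem \ref{thm:general_stability}.
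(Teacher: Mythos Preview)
Your proposal is correct and follows essentially the same argument as the paper: expand $\pers(x)-\pers(y)$ as a difference of coordinate differences, apply the triangle inequality in $\lR$, and bound each term by $\norm{x-y}_\infty$. The only cosmetic difference is the paper writes $|(x_2-x_1)-(y_2-y_1)|$ first before regrouping, whereas you regroup directly; the substance is identical.
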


\begin{proof}
For $x=(x_{1},x_{2}),y=(y_{1},y_{2})$, we have
\begin{align}
| \pers(x)-\pers(y) |
&= |(x_{2}-x_{1}) - (y_{2}-y_{1})| \nonumber \\
&\leq |x_{2}-y_{2}|+|x_{1}-y_{1}| \nonumber \\
&\leq 2 \norm{x-y}_{\infty}. \nonumber
\end{align}
\end{proof}

\begin{lem}
\label{lemm:w_continuous}
For any $x,y \in \lR^{2}$, we have 
\begin{align*}
 \abs{w_{{\rm arc}}(x)-w_{{\rm arc}}(y)}  \leq 2pC \max \{\pers(x)^{p-1}, \pers(y)^{p-1}\} \norm{x-y}_{\infty}.
\end{align*}
\end{lem}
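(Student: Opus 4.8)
The plan is to decompose the estimate into three independent Lipschitz-type bounds and then chain them together. Writing $w_{{\rm arc}}(x)=\arctan(C\pers(x)^{p})$, the key first observation is that $\arctan$ is $1$-Lipschitz, since $\abs{\arctan'(t)}=\frac{1}{1+t^{2}}\le 1$ for every $t\in\lR$. Consequently
\[
\abs{w_{{\rm arc}}(x)-w_{{\rm arc}}(y)} \le C\abs{\pers(x)^{p}-\pers(y)^{p}},
\]
which reduces the whole problem to controlling the difference of the $p$-th powers of the two persistences.

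Next I would handle this power difference. Every generator of a persistence diagram lies on or above the diagonal, so both $\pers(x)$ and $\pers(y)$ are nonnegative, and I can apply the mean value theorem to the map $u\mapsto u^{p}$ on the interval between $\pers(x)$ and $\pers(y)$: there is some $\xi$ lying between them with $\pers(x)^{p}-\pers(y)^{p}=p\,\xi^{p-1}\pare{\pers(x)-\pers(y)}$. Because $p>1$, the function $u\mapsto u^{p-1}$ is increasing on $[0,\infty)$, so $\xi^{p-1}\le\max\rl{\pers(x)^{p-1},\pers(y)^{p-1}}$, which gives
\[
\abs{\pers(x)^{p}-\pers(y)^{p}} \le p\max\rl{\pers(x)^{p-1},\pers(y)^{p-1}}\,\abs{\pers(x)-\pers(y)}.
\]

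Finally I would invoke Lemma \ref{lemm:persistence}, which bounds $\abs{\pers(x)-\pers(y)}\le 2\norm{x-y}_{\infty}$, and substitute it into the previous inequality. Multiplying the three estimates together produces precisely the factor $2pC\max\rl{\pers(x)^{p-1},\pers(y)^{p-1}}\norm{x-y}_{\infty}$ claimed in the statement.

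The computation itself is routine; the only step requiring genuine care is the second one, where the mean value theorem yields an intermediate value $\xi$ whose $(p-1)$-th power must be dominated by the larger of the two endpoint powers. This is exactly where the monotonicity of $u\mapsto u^{p-1}$ on the nonnegative reals (valid because $p>1$) together with the nonnegativity of persistence enters; without these hypotheses the domination by the maximum would fail. Everything else is a straightforward concatenation of Lipschitz bounds, so I expect no further obstacles.
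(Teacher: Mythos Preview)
Your proposal is correct and follows essentially the same three-step chain as the paper: the $1$-Lipschitz bound for $\arctan$, a bound on $\abs{\pers(x)^p-\pers(y)^p}$, and then Lemma~\ref{lemm:persistence}. The only cosmetic difference is that the paper obtains the middle estimate via the factorization $s^{p}-t^{p}=(s-t)(s^{p-1}+s^{p-2}t+\cdots+t^{p-1})$ rather than the mean value theorem; your MVT argument is equally valid (and in fact cleaner when $p$ is not an integer).
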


\begin{proof}
\begin{align}
&\abs{w_{{\rm arc}}(x)-w_{{\rm arc}}(y) } \nonumber \\
&=\abs{ \arctan(C \pers(x)^{p})-\arctan(C \pers(y)^{p}) } \label{eq:arctan} \\
& \leq C \abs{ \pers(x)^{p}-\pers(y)^{p}} \nonumber \\
& \leq C \abs{\pers(x)-\pers(y)} p \max \{\pers(x)^{p-1}, \pers(y)^{p-1}\} \label{eq:p}\\
& \leq 2 pC \max \{\pers(x)^{p-1}, \pers(y)^{p-1}\} \norm{x-y}_{\infty}. \nonumber
\end{align}
We have used the fact that the Lipschitz constant of $\arctan$ is $1$ in \eqref{eq:arctan} and for any $s,t \in \lR$,
\begin{align*}
s^{p}-t^{p} 
&=(s-t)(s^{p-1}+s^{p-2}t+\cdots+t^{p-1}) \\
& \leq (s-t) p \max \{ s^{p-1},t^{p-1}\}
\end{align*}
in \eqref{eq:p}.
\end{proof}

\begin{proof}[Proof of Theorem \ref{thm:general_stability}]
Here, let a multi-bijection $\gamma:D \ra E$ such that $\sup_{x \in D} \norm{x-\gamma(x)}_{\infty} \leq \ee$ and  $E':=\underline{E}-(\gamma(\underline{D})-\DD)$.
By the definition of $\gamma$, each point in $E'$ is $\ee$-close to the diagonal.
Then, we evaluate $d^{w_{{\rm arc}}}_{k_{G}}(D,E)$ as follows:

%Here, we consider the weighted measure $\mu^{w}_{D}=\sum_{x \in \underline{D}} w(x)\dd_ {x}$ and Gaussian kernel $k_{G}(x,y)=e^{-\frac{\norm{x-y}^{2}}{2 \sigma^{2}}}$.
%For the kernel $k_{G}(x,y)=w(x)w(y)k_{G}(x,y)$ defined in Section 3, we have the following equality:
%\[
%E_{k_{G}}(\mu_{D})=\int k_{G}(\cdot,x) d \mu_{D}(x)=\int k_{G}(\cdot,x) d \mu^{w}_{D}(x).
%\]

\begin{align}
&d^{w_{{\rm arc}}}_{k_{G}}(D,E) \nonumber \\
&=\norm{\int k_{G}(\cdot,x) d \mu^{w_{{\rm arc}}}_{D}(x)-\int k_{G}(\cdot,x) d \mu^{w_{{\rm arc}}}_{E}(y)}_{\cH_{k_{G}}} \nonumber \\
&\leq \norm{ \int k_{G}(\cdot,x) d \mu^{w_{{\rm arc}}}_{D}(x) -\int k_{G}(\cdot,y) d \mu^{w_{{\rm arc}}}_{\gamma(\underline{D})}(y)}_{\cH_{k_{G}}} + \norm{ \int k_{G}(\cdot,y) d \mu^{w_{{\rm arc}}}_{E'}(y)}_{\cH_{k_{G}}} \nonumber \\
&=\norm{ \sum_{x \in D} w_{{\rm arc}}(x)k_{G}(\cdot,x) -\sum_{x \in D} w_{{\rm arc}}(\gamma(x)) k_{G}(\cdot,\gamma(x))}_{\cH_{k_{G}}} \nonumber  \\
& ~ +\norm{\sum_{y \in E'} w_{{\rm arc}}(y)k_{G}(\cdot,y) }_{\cH_{k_{G}}} \nonumber  \\
%& \leq \norm{ \sum_{x \in D} w(x)k_{G}(\cdot,x) -\pare{\sum_{x \in D} w(x) k_{G}(\cdot,\gamma(x)) -\sum_{x \in D} w(x) k_{G}(\cdot,\gamma(x))} -\sum_{x \in D} w(\gamma(x)) k_{G}(\cdot,\gamma(x))}_{\cH_{k_{G}}} \nonumber \\
%& ~~+ \norm{\sum_{y \in E'} w(y)k_{G}(\cdot,y) }_{\cH_{k_{G}}} \nonumber \\
&\leq \norm{ \sum_{x \in D} w_{{\rm arc}}(x)k_{G}(\cdot,x) -\sum_{x \in D} w_{{\rm arc}}(x) k_{G}(\cdot,\gamma(x)) }_{\cH_{k_{G}}} \nonumber  \\
& ~+ \norm{\sum_{x \in D} w_{{\rm arc}}(x) k_{G}(\cdot,\gamma(x)) -\sum_{x \in D} w_{{\rm arc}}(\gamma(x)) k_{G}(\cdot,\gamma(x))}_{\cH_{k_{G}}} \nonumber \\
& ~+\norm{\sum_{y \in E'} w_{{\rm arc}}(y)k_{G}(\cdot,y) }_{\cH_{k_{G}}} \nonumber \\
&\leq \sum_{x \in D} w_{{\rm arc}}(x) \norm{k_{G}(\cdot,x)-k_{G}(\cdot,\gamma(x))}_{\cH_{k_{G}}} \nonumber \\
& ~+ \sum_{x \in D} \abs{w_{{\rm arc}}(x)-w_{{\rm arc}}(\gamma(x))} \norm{k_{G}(\cdot,\gamma(x))}_{\cH_{k_{G}}} + \sum_{y \in E'} w_{{\rm arc}}(y) \norm{k_{G}(\cdot,y)}_{\cH_{k_{G}}} \label{eq:dk_halfway}
\end{align}
Form Lemma \ref{lemm:Lip_k}, we have
\[
\norm{k_{G}(\cdot,x)-k_{G}(\cdot,\gamma(x))}_{\cH_{k_{G}}} \leq \frac{\sqrt{2}}{\sigma} \norm{x-\gamma(x)}_{\infty},
\]
from Lemma \ref{lemm:w_continuous}, 
\begin{align*}
\abs{w_{{\rm arc}}(x)-w_{{\rm arc}}(\gamma(x))}  \leq 2pC \max \{\pers(x)^{p-1}, \pers(\gamma(x))^{p-1}\} \norm{x-\gamma(x)}_{\infty}.
\end{align*}
Moreover, for any $x \in \lR^{2}$ we have $\norm{k_{G}(\cdot,x)}_{\cH_{k_{G}}}=\sqrt{k_{G}(x,x)}=1$.
Therefore, the continuation of \eqref{eq:dk_halfway} is
\begin{align}
& \sum_{x \in D} w_{{\rm arc}}(x) \frac{\sqrt{2}}{\sigma} \norm{x-\gamma(x)}_{\infty} \nonumber \\
& + \sum_{x \in D} 2pC \max \{\pers(x)^{p-1}, \pers(\gamma(x))^{p-1}\} \norm{x-\gamma(x)}_{\infty}+ \sum_{y \in E'} w(y) \nonumber \\
& \leq  \frac{\sqrt{2}}{\sigma} C \ee \sum_{x \in D} \pers(x)^{p} \label{eq:warc_bound} \\
& ~+ 2pC \ee \sum_{x \in D}  \pare{\pers(x)^{p-1}+\pers(\gamma(x))^{p-1} } +C \sum_{y \in E'} \pers(y)^{p} \nonumber \\
& \leq  \frac{\sqrt{2}}{\sigma} C \ee \sum_{x \in D} \pers(x)^{p} \nonumber \\
& ~+ 2pC \ee \sum_{x \in D}  \pare{\pers(x)^{p-1}+\pers(\gamma(x))^{p-1} }  +C \ee \sum_{y \in E'} \pers(y)^{p-1} \label{eq:pers_ee} \\
& = \biggl\{ \frac{\sqrt{2}}{\sigma} \Pers_{p}(D)+ 2p\Pers_{p-1}(D) +2p\Pers_{p-1}(\gamma(D))+\Pers_{p-1}(E') \biggr\} C \ee \\
&= \biggl\{ \frac{\sqrt{2}}{\sigma} \Pers_{p}(D)+ 2p\Pers_{p-1}(D) +(2p+1)\Pers_{p-1}(E)  \biggr\} C \ee \label{eq:total_last}.
\end{align}
We have used the fact $w_{\rm arc}(x)\leq C \pers(x)^p$ in \eqref{eq:warc_bound}, $\pers(y) \leq \ee \ ( y \in E')$ in \eqref{eq:pers_ee} and 
\[
\Pers_{p-1}(\gamma(D)),\Pers_{p-1}(E') \leq \Pers_{p-1}(E)
\]
 in \eqref{eq:total_last}.
Thus, if both $(p-1)$-degree total persistence of $D$ and that of $E$ are bounded, since $p$-degree total persistence of $D$ is also bounded from Proposition \ref{prop:persistence_inequality}, the coefficient of $\ee$ appearing in \eqref{eq:total_last} is bounded.
\end{proof}

\begin{proof}[Proof of Theorem 3.1]
For any finite set $X \subset M$, from Lemma \ref{lem:point_total}, there exists a constant $C_{M}>0$ such that
\[
\Pers_{p}(D_{q}(X)) \leq \frac{p}{p-d} C_{M}\diam(M)^{p-d}.
\]
By replacing $D$ and $E$ with $D_{q}(X)$ and $D_{q}(Y)$ in \eqref{eq:total_last}, respectively, we have 
\begin{align*}
& \biggl\{ \frac{\sqrt{2}}{\sigma} \Pers_{p}(D_{q}(X))+ 2p \Pers_{p-1}(D_{q}(X))+(2p+1)\Pers_{p-1}(D_{q}(Y))  \biggr\}C\ee  \\
& \leq \biggl\{ \frac{\sqrt{2}}{\sigma} \frac{p}{p-d} C_{M}\diam(M)^{p-d} + (4p+1) \frac{p-1}{p-1-d} C_{M}\diam(M)^{p-1-d}  \biggr\} C\ee\\
& = L(M,d;C,p,\sigma) \ee.
\end{align*}
This value is a constant dependent on $M,d,C,p,\sigma$ but independent of $X,Y$.
\end{proof}

\section{Proof of Proposition 3.1}

\begin{proof}
Let 
\[
    \tilde{\cH}:=\{wf:\lR^2_{ul}\to\lR\mid f\in \cH_k\},
\]
and define its inner product by
\[
\langle wf,wg\rangle_{\tilde{\cH}}:=\langle f, g\rangle_{\cH_k}.
\]
Using $w>0$, it is easy to see that $\tilde{\cH}$ is a Hilbert space, and the mapping $f\mapsto wf$ is an isomorphism of the Hilbert spaces.  Moreover, we can see $\tilde{\cH}$ is in fact $\cH_{k^w}$.  To see this, it is sufficient to see that $k^w(\cdot,x)=w(\cdot)w(x)k(\cdot,x)$  is a reproducing kernel of $\tilde{\cH}$.  Then, the uniqueness of a reproducing kernel for an RKHS completes the proof.  The reproducing property is in fact proved from 
\[
\langle wf, k^w(\cdot,x)\rangle_{\tilde{\cH}}=\langle f,w(x)k(\cdot,x)\rangle_{\cH_k} = w(x)f(x) = (wf)(x).
\]
The second assertion is obvious from Equations (2) and (3) in Section 3.1.
\end{proof}

\section{Synthesized data}
\label{sec:synthesized}
We describe the details of synthesized data used in Section 4.2.

Each data set contains a variable number of 2 dimensional points, which are located on one or two circles.  A data set always contains points along a larger circle $S_1$ of random center $(x_1,y_1)$ and radius $r_1$ raging from 1 to 10.  $N_1$ points are located along $S_1$ at even spaces.  Another circle $S_2$ of radius $r_2 =0.2$ at the origin exists with probability 0.5, and 10 points are located at even spaces, if exists.
The variables $r_1$ and $N_1$ are in fact noisy version of $r_1^o$ and $N_1^o$, which are also randomly given.  The generative model of these variables are shown later.

Consider the persistence diagram for the $r$-ball model with the above points, and let $(b,d)$ ($b\leq d$) be the birth-death coordinate for the generator corresponding to $S_1$.  Note that there are no noise to the points on the circle, once the circle is given, and $(b,d)$ is  essentially determined by $N_1$ and $r_1$:
if $N_1$ is sufficiently large, the birth time $b$ is approximately $\pi r_1/N_1$ (here we use $\sin \theta \approx \theta$ for a small $\theta$), and the death time $d$ is equal to $r_1$, if no intersection occurs with balls around the points on $S_2$. See Figure \ref{fig:birth-death}.

For the binary classification, the class label is assigned depending on whether or not $S_2$ exists, and whether or not the hypothetical (or true) circle and virtual data points given by $r_1^o$ and $N_1^o$ has long persistence.  Let $A_B$ and $A_D$ be constants (we set $A_B=1$ and $A_D = 4$ in the experiments).
We introduce binary variables $z_0$ and $z_1$; $z_0=1$ if $S_2$ exists, and $z_1=1$ if $\pi r_1^o/N_1^o\leq A_B$ and $r_1^o\geq A_D$.  Note that $z_1=1$ if the hypothetical (or true) generator $(b^o,d^o)$ approximately satisfy $b^o\leq A_B$ and $d^o\geq A_D$.   The class label $Y$ of the data set is then given by
\[
    Y={\bf XOR}(z_0,z_1).
\]

The generative models of $r_1,N_1,r_1^o$ and $N_1^o$ are given as follows.
The radius $r_1$ is generated by
\[
	r_1 = r_1^o+W^2, \quad W\sim N(0,1),
\]
and the number of points $N_1$ is given by
\[
	N_1 = \lceil N_1^o + 2 U\rceil, \quad U\sim N(0,1).
\]
The center point is given by $(x_1,y_1)=1.5(r_1,r_1)+(V_1^2,V_2^2)$ with independent $V_1,V_2\sim N(0,2)$.
The ``true" radius $r_1^o$ is generated by
\[
    r_1^o=\max\{ 1+8T^2, 10 \}, \qquad T\sim N(0,1)
\]
and the number of points $N_1^o$ is a random integer with equal probability in $[\lceil N_*/2\rceil, 4N_*]$, where $N_*:=\pi r_1^o/A_B$. Note that $N_*$ points on the circle of radius $r_1^o$ approximately give a birth time $A_B$.  If $N_1^o > N_*$ the true birth $b^o$ is smaller than $A_B$ approximately.

Note that the class label $Y$ is given based on $r_1^o$ and $N_1^o$, while the data points and persistence diagram are based on the noisy version $r_1$ and $N_1$.
By this randomness, the best classification boundary does not give 100$\%$ classification rate. Figure \ref{fig:synthesized} shows two examples of data set.

\begin{figure}[htbp]
 \begin{center}
  \includegraphics[width=120mm]{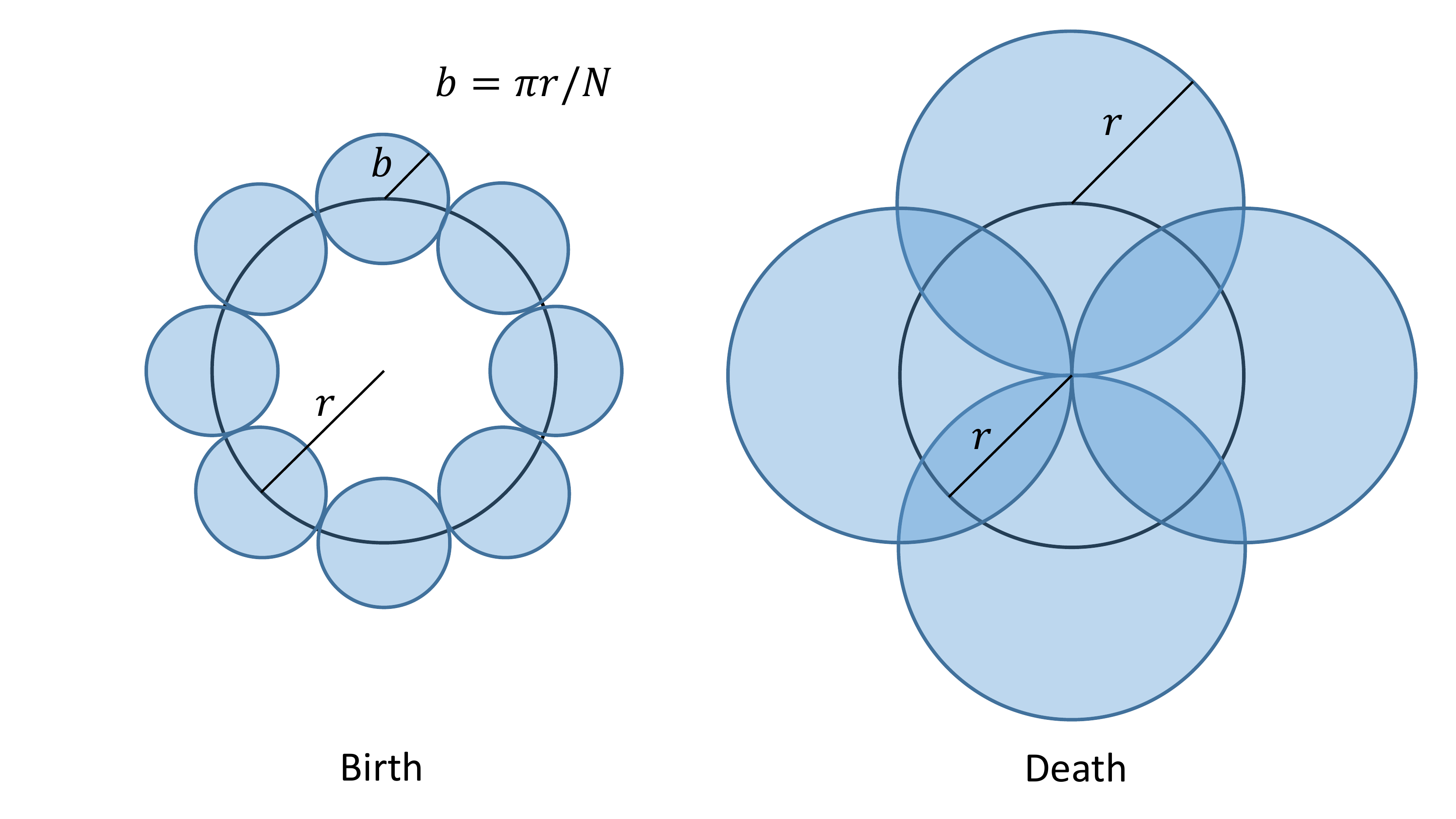}
 \end{center}
  \caption{Birth and death of the generator for $S_1$.  }
  \label{fig:birth-death}
\end{figure}

\begin{figure}[htbp]
 \begin{center}
  \includegraphics[width=120mm]{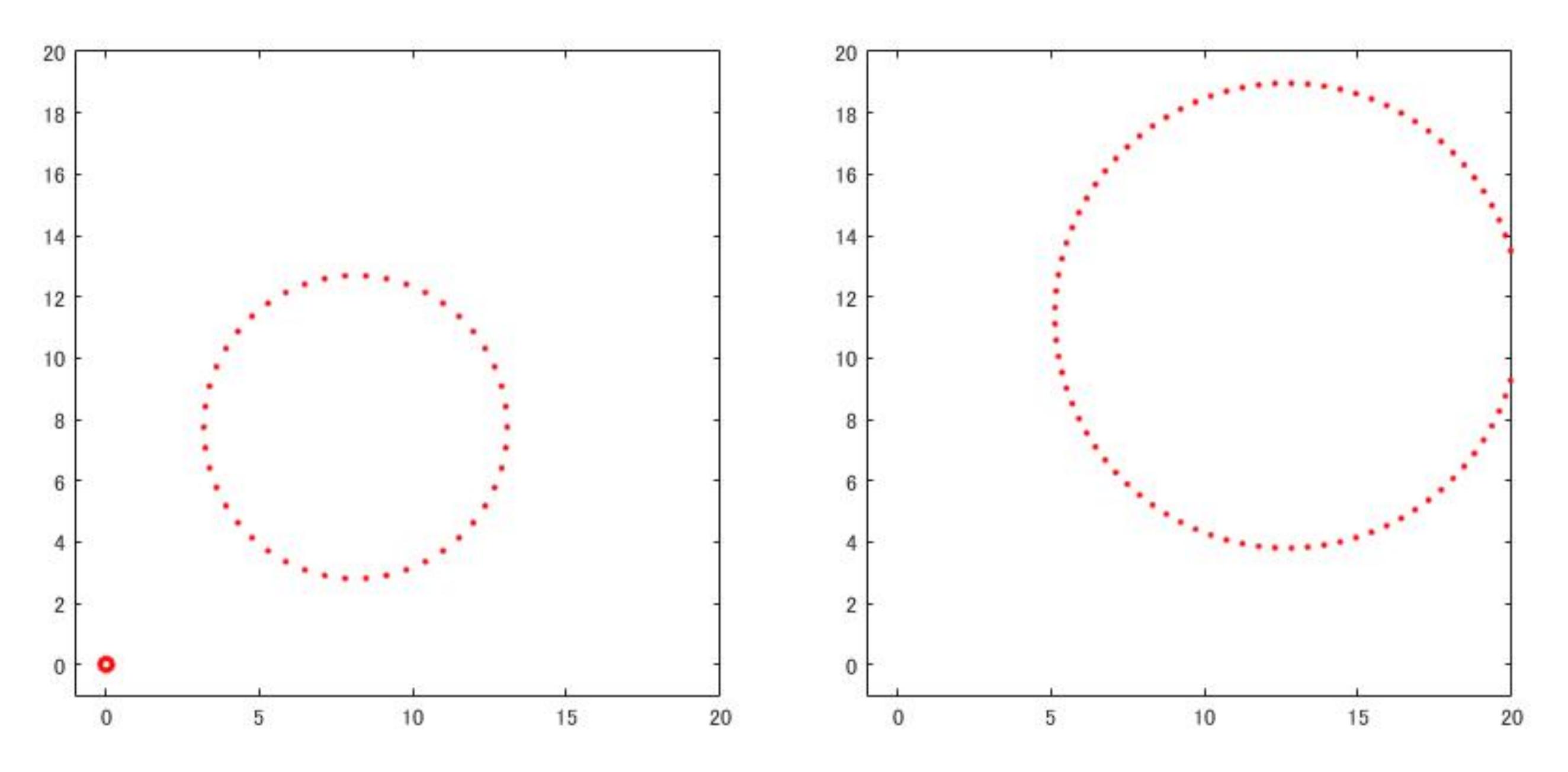}
 \end{center}
  \caption{Examples of synthesized data.  Left: $S_2$ exits. Right: No $S_2$. }
  \label{fig:synthesized}
\end{figure}

\bibliography{reference}
\bibliographystyle{alpha}

\end{document}